\def\rr{{\mathbb R}}
\def\rn{{\mathbb{R}^n}}
\def\zz{{\mathbb Z}}
\def\nn{{\mathbb N}}
\def\cs{{\mathcal S}}
\def\fz{\infty }
\def\az{\alpha}
\def\gz{\gamma}
\def\lz{\lambda}
\def\lf{\left}
\def\r{\right}
\def\hs{\hspace{0.25cm}}
\def\ls{\lesssim}
\def\noz{\nonumber}
\def\supp{\mathop\mathrm{\,supp\,}}
\def\esup{\mathop\mathrm{\,ess\,sup\,}}
\def\B{\mathfrak{B}}
\def\vh{{H_A^{\vec{p}}(\rn)}}
\def\vah{{H_A^{\vec{p},r,s}(\rn)}}
\def\lv{{L^{\vec{p}}(\rn)}}
\newtheorem{theorem}{Theorem}[section]
\newtheorem{lemma}[theorem]{Lemma}
\theoremstyle{definition}
\newtheorem{remark}[theorem]{Remark}
\newtheorem{definition}[theorem]{Definition}
\renewcommand{\appendix}{\par
   \setcounter{section}{0}%
   \setcounter{subsection}{0}%
   \setcounter{subsubsection}{0}%
   \gdef\thesection{\@Alph\c@section}%
   \gdef\thesubsection{\@Alph\c@section.\@arabic\c@subsection}%
   \gdef\theHsection{\@Alph\c@section.}%
   \gdef\theHsubsection{\@Alph\c@section.\@arabic\c@subsection}%
   \csname appendixmore\endcsname
 }
\numberwithin{equation}{section}
\begin{document}

\arraycolsep=1pt

\title{\bf\Large Fourier Transform of Anisotropic Mixed-norm Hardy Spaces
with Applications to Hardy--Littlewood Inequalities
\footnotetext{\hspace{-0.35cm} 2020 {\it
Mathematics Subject Classification}. Primary 42B35;
Secondary 42B30, 42B10, 46E30.
\endgraf {\it Key words and phrases.}
dilation, mixed-norm Hardy space, Fourier transform,
Hardy--Littlewood inequality.
\endgraf This research was funded by the Natural Science Foundation of Jiangsu Province (Grant No. BK20200647), the National Natural Science Foundation of China (Grant No. 12001527) and the Project Funded by China Postdoctoral Science Foundation (Grant No. 2021M693422).}}
\author{Jun Liu\footnote{Corresponding author,
E-mail: \texttt{junliu@cumt.edu.cn}},\ \ Yaqian Lu and Mingdong Zhang}
\date{}
\maketitle

\vspace{-0.8cm}

\begin{center}
\begin{minipage}{13cm}
{\small {\bf Abstract}\quad
Let $\vec{p}\in(0,1]^n$ be a $n$-dimensional vector and $A$ a dilation.
Let $H_A^{\vec{p}}(\mathbb{R}^n)$ denote the anisotropic
mixed-norm Hardy space defined via the radial maximal function.
Using the known atomic characterization of
$H_{A}^{\vec{p}}(\mathbb{R}^n)$ and establishing a uniform estimate
for corresponding atoms, the authors prove that the Fourier transform
of $f\in H_A^{\vec{p}}(\mathbb{R}^n)$ coincides with a
continuous function $F$ on $\mathbb{R}^n$ in the sense of tempered distributions.
Moreover, the function $F$ can be controlled pointwisely by the product of
the Hardy space norm of $f$ and a step function with respect to the transpose
matrix of $A$. As applications, the authors obtain a higher order of convergence for the function
$F$ at the origin, and an analogue of Hardy--Littlewood inequalities in the present setting
of $H_A^{\vec{p}}(\mathbb{R}^n)$.}
\end{minipage}
\end{center}

\vspace{0.2cm}

\section{Introduction\label{s1}}

Let $\vec{p}:=(p_1,\ldots,p_n)\in(0,\fz)^n$ be a $n$-dimensional vector and $A$ a dilation (see Definition \ref{2d1} below).
The anisotropic mixed-norm Hardy space $\vh$ was introduced in \cite{hlyy20}.
The main purpose of this paper is to study the Fourier transform
on $\vh$ associated with $\vec{p}\in(0,1]^n$.
The question of the Fourier transform on classical Hardy spaces $H^p({\mathbb{R}^n})$ was put forward originally by Fefferman and Stein \cite{fs72}, which is an important topic in the real-variable theory of
$H^p({\mathbb{R}^n})$. Applying entire functions of exponential type,
Coifman \cite{coi74} first characterized the Fourier transform $\widehat{f}$ of
$f\in H^p({\mathbb R})$. The related conclusions in higher dimensions were studied in \cite{bw13,col82,gk01,tw80}. Particularly, the following estimate was given by
Taibleson and Weiss \cite{tw80}: for any given $p\in(0,1]$,
the Fourier transform of
$f\in H^p({\mathbb{R}^n})$ coincides with a continuous
function $F$ on $\rn$, which satisfies that there exists a positive constant $C_{(n,p)}$ such that,
for any $x\in\rn$,
\begin{align}\label{1e1}
\left|F(x)\right|\le C_{(n,p)}\|f\|_{H^p({\mathbb{R}^n})}|x|^{n(1/p-1)}.
\end{align}
Moreover, the estimate \eqref{1e1} illustrates the following inequality as a generalization of the well-konwn Hardy--Littlewood
inequality for Hardy spaces, that is, for any fixed $p\in(0,1]$, there
exists a positive constant $K$ such that, for each $f\in H^p({\mathbb{R}^n})$,
\begin{align}\label{1e2}
\left[\int_{{\mathbb{R}^n}}|x|^{n(p-2)}
\left|F(x)\right|^p\,dx\right]^{1/p}\le K\|f\|_{H^p({\mathbb{R}^n})},
\end{align}
where $F$ is as in \eqref{1e1}; see \cite[p.\,128]{ste93}.

On the other hand, the theory of classic Hardy spaces $H^p(\rn)$ has a wide range of applications
in many mathematical fields such as harmonic analysis and partial differential equations;
see, for instance, \cite{fs72,mul94,ste93,sw60}. Inspired by the notable work of Calder\'{o}n and Torchinsky \cite{ct75}
on parabolic Hardy spaces, there were various generalizations of classic Hardy spaces; see,
for instance, \cite{bow03,cgn17,gkp21,hlyy20,tri06,yyh13,whhy21}. In particular, Bownik \cite{bow03} introduced the anisotropic Hardy space $H^p_A({\mathbb{R}^n})$, where $p\in(0,\fz)$ and $A$ is a dilation, which is actually a generalization of both the isotropic Hardy space and the parabolic Hardy space. In addition, via the atomic characterization of $H^p_A({\mathbb{R}^n})$, Bownik and Wang \cite{bw13} extended both inequalities \eqref{1e1} and
\eqref{1e2} to the anisotropic Hardy space $H^p_A({\mathbb{R}^n})$.
Recently, the analogous results were proved in the new setting of Hardy spaces associated with ball quasi-Banach function spaces and
the anisotropic mixed-norm Hardy space ${H_{\vec{a}}^{\vec{p}}(\rn)}$,
where
$$\vec{a}:=(a_1,\ldots,a_n)\in [1,\infty)^n \quad
{\rm and} \quad\vec{p}:=(p_1,\ldots,p_n)\in (0,1]^n;$$
see, respectively, \cite{hcy21,hcy}. In addition, motivated by the previous work of \cite{cgn17,fs72,hlyy19},
Huang et al. \cite{hlyy20} introduced the anisotropic mixed-norm Hardy space $\vh$ with respect to
$\vec{p}\in(0,\fz)^n$ and a dilation $A$, and investigated its various real-variable characterizations.
For more information on mixed-norm function spaces, we refer the
reader to \cite{cs,cs20,cs21,cg20,cgn17-2,jms13,jms15,noss21}.

Inspired by the known results about the Fourier transform of the aforementioned Hardy-type spaces [namely, $H^p({\mathbb{R}^n})$, $H^p_A({\mathbb{R}^n})$ and ${H_{\vec{a}}^{\vec{p}}(\rn)}$], using the real-variable theory of the anisotropic mixed-normed Hardy space
$\vh$ from \cite{hlyy20}, in this paper, we extend the inequality \eqref{1e1} to the setting of anisotropic mixed-norm Hardy spaces $\vh$ and also  present some applications via our main result.

As a preliminary, in Section \ref{s2}, we present definitions of dilations,
mixed-norm Lebesgue spaces $\lv$ and anisotropic mixed-norm Hardy spaces
(see Definitions \ref{2d1}, \ref{2d4} and \ref{2d6} below).

Section \ref{s3} is aimed at proving the main result (see Theorem \ref{3t1} below), namely,
the Fourier transform $\widehat{f}$ of $f\in\vh$ coincides with a continuous
function $F$ in the sense of tempered distributions. To this end, applying Lemmas \ref{3l1} and \ref{3l3},
we first obtain a uniform pointwise estimate for atoms (see Lemma \ref{3l2} below).
Then, we use some real-variable characterizations from \cite{hlyy20}, especially atom decompositions,
to show Theorem \ref{3t1}.
Meanwhile, we also get a pointwise inequality
of the continuous function $F$, which indicates the necessity of vanishing moments of anisotropic mixed-norm atoms in some
sense [see Remark \ref{3r1}(ii) below].

As applications, in Section \ref{s4}, we present some consequences of Theorem \ref{3t1}.
First, the above function $F$ has a higher order convergence at the origin; see \eqref{4e1} below.
Moreover, we prove that the term
$$|F(\cdot)|\min\lf\{\lf[\rho_*(\cdot)\r]^{1-\frac1{p_-}-\frac1{p_+}},\,
\lf[\rho_*(\cdot)\r]^{1-\frac{2}{p_+}}\r\}$$
is $L^{p_+}$-integrable, and this integral can be uniformly controlled by a positive constant
multiple of the Hardy space norm of $f$; see \eqref{4e8} below.
The above result is actually a generalization of the Hardy--Littlewood inequality from classic Hardy spaces to the setting of anisotropic mixed-norm Hardy spaces.

Finally, we make some conventions on notation.
Let $\nn:=\{1,2,\ldots\}$, $\zz_+:=\{0\}\cup\nn$
and $\mathbf{0}$ be the \emph{origin} of $\rn$. For a given multi-index
$\az:=(\az_1,\ldots,\az_n)\in(\zz_+)^n=:\zz_+^n$,
let $|\az|:=\az_1+\cdots+\az_n$ and
$\partial^{\az}
:=(\frac{\partial}{\partial x_1})^{\az_1}\cdots(\frac{\partial}{\partial x_n})^{\az_n}.$
We use $C$ to denote a positive constant
which is independent of the main parameters, but may vary in different setting.
The \emph{symbol} $g\ls h$ means $g\le Ch$ and,
if $g\ls h\ls g$, then we write $g\sim h$. If $f\le Ch$ and $h=g$ or $h\le g$,
we then write $f\ls h\sim g$ or $f\ls h\ls g$, \emph{rather than} $f\ls h=g$
or $f\ls h\le g$. In addition,
for any set $E\subset\rn$, we denote its \emph{characteristic function} by $\mathbf{1}_E$,
the set $\rn\setminus E$ by $E^\complement$ and its \emph{n-dimensional Lebesgue measure} by $|E|$.
For any $s\in\mathbb{R}$, we use $\lfloor s\rfloor$
(resp., $\lceil s \rceil$) to denote the \emph{largest} (resp., \emph{least})
\emph{integer not} greater (resp., \emph{less}) \emph{than} $s$.

\section{Preliminaries \label{s2}}

In this section, we give the definitions of dilations, mixed-norm Lebesgue spaces and anisotropic mixed-norm Hardy spaces.
 The following definition is originally from \cite{bow03}.

\begin{definition}\label{2d1}

We call $A$ a \emph{dilation} if $A$ is a real $n\times n$ matrix $A$ and satisfies the following condition:
 $$\min_{\lz\in\sigma(A)}|\lz|>1,$$
where $\sigma(A)$ denotes the \emph{set of all eigenvalues of $A$}. We denote the eigenvalues of $A$ by $\lambda_1,\ldots,\lambda_n$, which satisfies $1<|\lambda_1|\leq\cdots\leq|\lambda_n|$. Here and thereafter, let $\lambda_-$ and $\lambda_+$ be two numbers such that
$1<\lambda_-<|\lambda_1|\leq\cdots\leq|\lambda_n|<\lambda_+$.
\end{definition}

By \cite[p.\,5, Lemma 2.2]{bow03}, for a given dilation $A$, there exists an open set in $\rn$ which is called an \emph{ellipsoid},
denoted by $\Delta$, and has the following property: $|\Delta|=1$, and we can find a constant $r\in(1,\infty)$ such that $\Delta\subset r\Delta\subset A\Delta$. For any given $i\in\zz$, we denote $A^i\Delta$ by $B_i$. It is easy to check that $\{B_i\}_{i\in\zz}$ is a family of open sets around the origin, $B_i\subset rB_i\subset B_{i+1}$ and $|B_i|=b^i$ with $b:=|\det A|$.
For any given dilation $A$, the notation $\mathfrak{B}$ is the set of all \emph{dilated balls}, namely,
\begin{align}\label{2e1}
\mathfrak{B}:=\lf\{x+B_i:\ x\in\rn,\ i\in\zz\r\}.
\end{align}

The next two definitions were introduced by Bownik \cite{bow03}.

\begin{definition}\label{2d2}
A measurable mapping $\rho:\ \rn \to [0,\infty)$
is called a \emph{homogeneous quasi-norm}, with respect to a dilation $A$, if
\begin{enumerate}
\item[\rm{(i)}] $\rho(x) \ge0$, and $\rho(x)=0 \Rightarrow x=0$;

\item[\rm{(ii)}] for any $x\in\rn$, $\rho(Ax)=b\rho(x)$;

\item[\rm{(iii)}] for any $x$, $y\in\rn$, $\rho(x+y)\le c[\rho(x)+\rho(y)]$, where $c$ is a positive constant independent of $x$ and $y$.
\end{enumerate}
\end{definition}

It is easy to verify that the following \emph{step homogeneous quasi-norm} is a homogeneous quasi-norm.

\begin{definition}\label{2d3}
A \emph{step homogeneous quasi-norm} $\rho$ with respect to a dilation $A$, is defined by setting, for each $x\in\rn$,
\begin{equation*}
\rho(x):=\left\{
\begin{array}{cl}
b^i&\hspace{0.6cm} {\rm when}\hspace{0.5cm} x\in
B_{i+1}\backslash B_i\\
0&\hspace{0.6cm} {\rm when}\hspace{0.5cm} x=\mathbf{0}
\end{array}\r..
\end{equation*}
\end{definition}

In \cite[p.\,5, Lemma 2.4]{bow03}, it was proved that any two homogeneous
quasi-norms associated with a fixed dilation $A$ are equivalent. For convenience,
in what follows, we always use the step homogeneous quasi-norm.

A $C^\fz$ complex-valued function $\phi$ on $\rn$ is called a \emph{Schwartz function} if, for
every pair of $k\in\zz_+$ and multi-index $\gz\in\zz_+^n$, the following inequality
$$\|\phi\|_{\gz,k}:=
\sup_{x\in\rn}[\rho(x)]^k\lf|\partial^\gz\phi(x)\r|<\infty$$
holds true.
The set of all Schwartz functions on $\rn$ is denoted by $\cs(\rn)$. Indeed, $\{\|\cdot\|_{\gz,k}\}_{\gz\in\zz_+^n,\,k\in\zz_+}$ is a family of semi-norms, which induces a topology and makes $\cs(\rn)$ to be a topological vector space. We denote the dual space of $\cs(\rn)$
by $\cs'(\rn)$, equipped with the weak-$\ast$ topology.

For a $n$-dimensional vector $\vec{p}:=(p_1,\ldots,p_n)\in{(0,\fz]}^n$, let
\begin{align}\label{2e3}
p_-:=\mathop\mathrm{min}_{i\in\{1,\ldots,n\}}\{p_i\},\hspace{0.35cm}
p_+:=\mathop\mathrm{max}_{i\in\{1,\ldots,n\}}\{p_i\},\hspace{0.35cm}
{\rm and}\hspace{0.35cm}\underline{p}:=\min\{p_-,1\}.
\end{align}

\begin{definition}\label{2d4}
Let $\vec{p}:=(p_1,\ldots,p_n)\in{(0,\fz]}^n$. The \emph{mixed-norm Lebesgue space} ${L}^{\vec{p}}(\rn)$  is defined to be the set of all measurable functions $f$ such that
$${\Arrowvert f\Arrowvert}_{{L}^{\vec{p}}(\rn)}:=\lf( \int_\rr \cdots\lf(\int_\rr|f(x_1,\ldots,x_n)|^{p_1} d x_1\r)^{\frac{p_2}{p_1}}\cdots d x_n \r)^{\frac{1}{p_n}} < \fz$$
with the usual modifications made when $p_i=\fz$ for some $i\in\{1,\ldots,n\}$.
\end{definition}

\begin{definition}\label{2d5}
Let $\varphi\in\cs(\rn)$ satisfy $\int_\rn\varphi(x)\,dx\neq0$. The
\emph{radial maximal function} $M_\varphi(f)$ of $f\in\cs'(\rn)$, with respect to $\varphi$, is defined by
\begin{align*}
M_\varphi(f)(x):= \sup_{k\in\zz}|f\ast\varphi_k(x)|, \quad \forall\, x\in\rn,
\end{align*}
here and thereafter, for any $\varphi\in\cs(\rn)$ and $k\in\zz$, $\varphi_k(\cdot):=b^{k}\varphi(A^{k}\cdot)$.
\end{definition}

\begin{definition}\label{2d6}
Let $\vec{p}\in{(0,\fz)}^n$ and $\varphi$ be as in Definition \ref{2d5}.
The \emph{anisotropic mixed-norm Hardy space} $\vh$ is defined by setting
\begin{equation*}
\vh:=\lf\{f\in\cs'(\rn):\ M_\varphi(f)\in\lv\r\}.
\end{equation*}
Moreover, for any $f\in\vh$, let
$\|f\|_{\vh}:=\| M_\varphi(f)\|_{\lv}$.
\end{definition}

\section{Fourier transforms of $\vh$\label{s3}}

In this section, we study the Fourier transform $\widehat{f}$ of $f\in\vh$. We first present
the notion of Fourier transforms.

For a given schwartz function $\varphi\in\cs(\rn)$, we define its \emph{Fourier transform} as follows:
$$\mathscr{F}\varphi(x)=\widehat\varphi(x)
:=\int_{\rn}\varphi(t)e^{-2\pi\imath t\cdot x}\,dt, \quad \forall\,x\in\rn,$$
where $\imath:=\sqrt{-1}$ and $t\cdot x := \sum_{k=1}^n t_k x_k$
for any $t:=(t_1,\ldots,t_n)$, $x:=(x_1,\ldots,x_n)\in\rn$.
Futhermore, we can also define the \emph{Fourier transform} of $f\in\cs'(\rn)$, also denoted by $\mathscr{F}f$ or $\widehat{f}$,
that is, for each $\varphi\in\cs(\rn)$,
$$\langle\mathscr{F}f,\varphi\rangle=\lf\langle\widehat f,\varphi\r\rangle
:=\lf\langle f,\widehat{\varphi}\r\rangle.$$

We now give the main result of this paper.
\begin{theorem}\label{3t1}
Let $\vec{p}\in{(0,1]}^n$. Then, for any
$f\in\vh$, there exists a continuous function $F$ on $\rn$ such that
$$\widehat{f}=F\quad in\quad \cs'(\rn),$$
and there exists a positive constant $C$, depending only on $A$ and $\vec{p}$, such that, for any $x\in\rn$,
\begin{align}\label{3e1}
|F(x)|\le C\|f\|_{\vh}\max\lf\{[\rho_\ast(x)]^{\frac1{p_-}-1},
[\rho_\ast(x)]^{\frac1{p_+}-1}\r\},
\end{align}
here and thereafter, $\rho_*$ is as in Definition \ref{2d2} with $A$ replaced by its
transposed matrix $A^*$.
\end{theorem}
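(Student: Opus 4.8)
The plan is to prove Theorem \ref{3t1} by means of the atomic characterization of $\vh$ from \cite{hlyy20}, in two stages: first establish a uniform pointwise bound for the Fourier transform of a single atom (this is the content of Lemma \ref{3l2}, which I take as already available), and then sum these bounds over an atomic decomposition $f=\sum_i\lz_i a_i$ of $f$, where each $a_i$ is a $(\vec{p},r,s)$-atom supported on a dilated ball $B_i=x_i+B_{\ell_i}\in\B$ and the coefficients satisfy the quasi-norm control $\|\{\lz_i\}\|\ls\|f\|_{\vh}$.

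For the atomic bound I would fix an atom $a$ supported on $B=x_0+B_\ell$ with vanishing moments up to order $s$ and write $\widehat a(\xi)=\int_{\rn}a(x)\,e^{-2\pi\imath x\cdot\xi}\,dx$. Exploiting the vanishing moments, I would subtract from $e^{-2\pi\imath x\cdot\xi}$ its degree-$s$ Taylor polynomial in $x$ about the center $x_0$; since $\int_{\rn}a(x)(x-x_0)^\az\,dx=0$ for all $|\az|\le s$, the polynomial part drops out and $|\widehat a(\xi)|$ is controlled by the Taylor remainder. The decisive anisotropic step is the identity $(x-x_0)\cdot\xi=A^{-\ell}(x-x_0)\cdot(A^*)^\ell\xi$: on the support one has $A^{-\ell}(x-x_0)\in\Delta$, hence bounded, so the remainder is governed entirely by the transposed-scale vector $(A^*)^\ell\xi$. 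This is exactly why $\rho_*$ (with respect to $A^*$) enters, since the Fourier transform intertwines dilation by $A$ with dilation by $(A^*)^{-1}$. Balancing this decaying remainder estimate against the trivial bound $|\widehat a(\xi)|\le\|a\|_{L^1}$ coming from the $L^r$-normalization of $a$ yields a uniform estimate of $\min$-type, the decaying factor carrying a power of $b^\ell\rho_*(\xi)$ whose exponent is dictated by the eigenvalue bounds $\lz_\pm$ and by $s+1$.

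With the atomic bound in hand I would turn to convergence and the distributional identity. The $\min$-type estimate guarantees absolute and locally uniform convergence of the series $F:=\sum_i\lz_i\widehat{a_i}$, both near and away from the origin, so that $F$ is continuous on $\rn$. To identify $\widehat f=F$ in $\cs'(\rn)$, I would check that for every $\phi\in\cs(\rn)$,
$$\lf\langle\widehat f,\phi\r\rangle=\lf\langle f,\widehat\phi\r\rangle
=\sum_i\lz_i\lf\langle a_i,\widehat\phi\r\rangle
=\sum_i\lz_i\lf\langle\widehat{a_i},\phi\r\rangle=\langle F,\phi\rangle,$$
where the interchange of summation and pairing is justified by the uniform convergence just established.

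The principal obstacle, which is also the concluding step, is to sum the atomic bounds in the mixed-norm setting so as to recover the two-regime estimate \eqref{3e1}. The difficulty is the interplay between the non-product anisotropic balls $B_\ell$ and the mixed quasi-norm $\|\cdot\|_{\lv}$: the quantity $\|\mathbf{1}_{B}\|_{\lv}$ does not factor across coordinates, and the exponents $p_1,\dots,p_n$ respond differently to the dilation $A$. Grouping the atoms according to their scale $\ell_i$ and summing the resulting geometric series, the low-frequency regime $\rho_*(\xi)\le1$ is controlled by the exponent attached to $p_+$ and the high-frequency regime $\rho_*(\xi)\ge1$ by that attached to $p_-$; taking the larger of the two powers $[\rho_*(\xi)]^{1/p_+-1}$ and $[\rho_*(\xi)]^{1/p_--1}$ reproduces the maximum in \eqref{3e1}. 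I expect the careful bookkeeping of these anisotropic mixed-norm scale estimates—ensuring that each geometric series converges and that the final constant depends only on $A$ and $\vec{p}$—to be the technical core of the proof.
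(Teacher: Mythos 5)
Your overall architecture (atomic decomposition, pointwise Fourier bound for a single atom via Taylor expansion against the vanishing moments, then summation) matches the paper's, and your single-atom analysis --- the identity $(x-x_0)\cdot\xi=A^{-\ell}(x-x_0)\cdot(A^{*})^{\ell}\xi$, the Taylor remainder, and the balancing against the trivial $L^1$ bound --- is exactly what Lemmas \ref{3l1} and \ref{3l2} do. The gap is in the step you yourself identify as the technical core. You propose to keep the scale $\ell_i$ in the atomic bound and then ``group the atoms according to their scale and sum the resulting geometric series.'' This cannot work as stated: an atomic decomposition of a general $f\in\vh$ gives no control on how the coefficient mass $\{\lz_i\}_{i}$ is distributed over scales (all atoms could sit at one scale, and the mass at scale $\ell$ need not decay in $\ell$), so there is no geometric series in $\ell$ to sum. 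The paper avoids this entirely by making the single-atom bound \emph{uniform in the scale}: starting from $|\widehat{a}(x)|\ls b^{i_0}\max\{b^{-i_0/p_-},b^{-i_0/p_+}\}\min\{1,|(A^{*})^{i_0}x|^{s+1}\}$ and splitting on $\rho_*(x)\le b^{-i_0}$ versus $\rho_*(x)>b^{-i_0}$, one checks that for \emph{every} $x$ and \emph{every} $i_0$ this is $\ls\max\{[\rho_*(x)]^{1/p_--1},[\rho_*(x)]^{1/p_+-1}\}$; the two-regime maximum already lives at the level of one atom, and the summation over atoms is then a plain $\ell^1$ sum of coefficients, with no bookkeeping over scales.

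Two ingredients you would still need to supply to close the argument. First, the mixed-norm scale estimate $\|\mathbf{1}_{B_{i_0}}\|_{\lv}^{-1}\ls\max\{b^{-i_0/p_-},b^{-i_0/p_+}\}$, which is precisely the point where the non-product nature of $\|\cdot\|_{\lv}$ bites; you flag this difficulty but do not resolve it (the paper handles it by a direct computation for large $i_0$ and by \cite[Lemma 6.8]{hlyy20} for small $i_0$). Second, the bound $\sum_{i}|\lz_i|\ls\|f\|_{\vh}$: this does not follow from the mixed-norm sequence quasi-norm control you quote unless you use $\underline{p}\le1$ together with the reverse Minkowski inequality (Lemma \ref{3l5}), and it is needed both for the pointwise absolute convergence of $\sum_i\lz_i\widehat{a_i}$ and for justifying the interchange of $\sum_i$ with the pairing against $\widehat\phi$ (locally uniform convergence alone does not control the integral of $F\phi$ over all of $\mathbb{R}^n$; one also needs the global growth bound on each $\widehat{a_i}$ against the Schwartz decay of $\phi$, as in the paper's verification of \eqref{3e14}).
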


Recall that, for a given measurable set $E\subset\rn$, the \emph{Lebesgue space}
$L^p(E)$, $0<p<\fz$, is the set of all the measurable functions satisfying that
$$\|f\|_{L^p(E)}:=\lf[\int_E|f(x)|^p\,dx\r]^{1/p}<\fz,$$
and $L^{\fz}(E)$ is the set of all the measurable functions satisfying that
$$\|f\|_{L^\fz(E)}:=\esup_{x\in E}|f(x)|<\fz.$$

The dilation operator $D_A$ is defined by setting, for any measurable function $f$ on $\rn$,
$$D_A(f)(\cdot):=f(A\cdot).$$
Then, for any $f\in L^1(\rn)$, $k\in\zz$ and $x\in\rn$, the following identity
$$\widehat{f}(x)=b^k\lf(D_{A^*}^k\mathscr{F}D_{A}^kf\r)(x)$$
can be easily verified.

Next, we present some notions used in the real-variable characterizations of anisotropic mixed-norm Hardy spaces; see \cite{hlyy20}.

\begin{definition}\label{3d3}
\begin{enumerate}
Let $\vec{p}\in{(0,\fz)}^n$, $r\in(1,\fz]$ and
\begin{align}\label{3e2}
s\in\lf[\lf\lfloor\lf(\dfrac1{p_-}-1\right)
\dfrac{\ln b}{\ln\lz_-}\right\rfloor,\fz\right)\cap\zz_+,
\end{align}
where $p_-$ is as in \eqref{2e3}.
\item[(\uppercase\expandafter{\romannumeral1})]
A measurable function $a$ on $\rn$ is called an \emph{anisotropic $(\vec{p},r,s)$-atom}
(simply, a $(\vec{p},r,s)$-\emph{atom}) if
\begin{enumerate}
\item[{\rm (i)}] $\supp a \subset B$, where
$B\in\B$ with $\B$ as in \eqref{2d1};

\item[{\rm (ii)}] $\|a\|_{L^r(\rn)}\le \frac{|B|^{1/r}}{\|\mathbf{1}_B\|_{\lv}}$;

\item[{\rm (iii)}] $\int_{\rn}a(x)x^\gz\,dx=0$ for any $\gz\in\zz_+^n$
with $|\gz|\le s$.
\end{enumerate}
\item[(\uppercase\expandafter{\romannumeral2})]
The \emph{anisotropic mixed-norm atomic Hardy space} $\vah$ is defined to be the
set of all $f\in\cs'(\rn)$ satisfying that there exist a sequence
$\{\lz_i\}_{i\in\nn}\subset\mathbb{C}$ and a sequence of $(\vec{p},r,s)$-atoms
$\{a_i\}_{i\in\nn}$, supported respectively in
$\{B^{(i)}\}_{i\in\nn}\subset\B$ such that
\begin{align*}
f=\sum_{i\in\nn}\lz_ia_i
\quad\mathrm{in}\quad\cs'(\rn).
\end{align*}
Furthermore, for any $f\in\vah$, let
\begin{align*}
\|f\|_{\vah}:=
{\inf}\lf\|\lf\{\sum_{i\in\nn}
\lf[\frac{|\lz_i|\mathbf{1}_{B^{(i)}}}{\|\mathbf{1}_{B^{(i)}}\|_{\lv}}\r]^
{\underline{p}}\r\}^{1/\underline{p}}\r\|_{\lv},
\end{align*}
where the infimum is taken over all the decompositions of $f$ as above.
\end{enumerate}
\end{definition}

The next Lemma \ref{3l1} will be used to prove Lemma \ref{3l2} below.

\begin{lemma}\label{3l1}
Let $\vec{p}$, $r$ and $s$ be as in Definition \ref{3d3}.
Assume that $a$ is a $(\vec{p},r,s)$-atom supported in $x_0+B_{i_0}$
with some $x_0\in\rn$ and $i_0\in\zz$. Then there exists a positive
constant $C$, depending only on $A$ and $s$, such that,
for any $\alpha\in\zz_+^n$ with $|\az|\leq s$ and $x\in\rn$,
\begin{align*}
\lf|\partial^\az\lf(\mathscr{F}D_{A}^{i_0}a\r)(x)\r|
\leq Cb^{-i_0/r}\|a\|_{L^r(\rn)}\min\lf\{1,|x|^{s-|\az|+1}\r\}{}.
\end{align*}
\end{lemma}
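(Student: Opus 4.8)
The plan is to reduce the estimate to properties of the Fourier transform of a single atom and then exploit the vanishing-moment condition via a Taylor expansion. First I would observe that, because $a$ is supported in $x_0+B_{i_0}$, the rescaled function $D_A^{i_0}a=a(A^{i_0}\cdot)$ is supported in $A^{-i_0}(x_0+B_{i_0})=A^{-i_0}x_0+B_0=A^{-i_0}x_0+\Delta$, i.e.\ in a translate of the fixed ellipsoid $\Delta$ of measure $1$. Writing $g:=D_A^{i_0}a$, a change of variables gives $\|g\|_{L^r(\rn)}=b^{-i_0/r}\|a\|_{L^r(\rn)}$, and the vanishing-moment condition (iii) is preserved under the invertible linear change of variables (since $\int a(x)x^\gz\,dx=0$ for $|\gz|\le s$ forces $\int g(y)y^\gz\,dy=0$ for $|\gz|\le s$ as well, because monomials of degree $\le s$ are mapped to polynomials of degree $\le s$). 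Thus it suffices to prove the claimed bound for the derivatives $\partial^\az\widehat g$ of the Fourier transform of a function $g$ supported in a unit-measure translate of $\Delta$, with $\int g\,y^\gz\,dy=0$ for $|\gz|\le s$ and a controlled $L^r$-norm.

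Next I would write down the basic formula $\partial^\az\widehat g(x)=\int_{\rn}g(y)(-2\pi\imath y)^\az e^{-2\pi\imath y\cdot x}\,dy$, so that $\partial^\az\widehat g$ is the Fourier transform of the function $y\mapsto(-2\pi\imath y)^\az g(y)$. The two regimes in $\min\{1,|x|^{s-|\az|+1}\}$ are handled separately. For the bound by a constant (the ``$1$'' term), I would simply estimate $|\partial^\az\widehat g(x)|\le\int_{\supp g}|2\pi y|^{|\az|}|g(y)|\,dy$ and control this by Hölder's inequality on the unit-measure support, using that $|y|$ is bounded there (since $\Delta$ and its translates by $A^{-i_0}x_0$ — here the relevant point is that $y$ ranges over a bounded set after translation, but more care is needed because the translate moves; in fact the monomial weights and the moment conditions should be applied after centering, so I would actually expand about a suitable center). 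For the more delicate bound by $|x|^{s-|\az|+1}$ (relevant when $|x|$ is small), I would use the vanishing moments: subtract from $e^{-2\pi\imath y\cdot x}$ its Taylor polynomial of degree $s-|\az|$ in the variable $x$ about the origin. Because $\int g(y)\,y^\bz\,dy=0$ for $|\bz|\le s$, all monomial terms $y^\bz$ with $|\bz|\le s-|\az|+|\az|=s$ produced by multiplying the Taylor polynomial by $y^\az$ integrate to zero against $g$, leaving only the Taylor remainder, which carries a factor $|x|^{s-|\az|+1}$.

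The key estimate is therefore the remainder bound $\big|e^{-2\pi\imath y\cdot x}-\sum_{|\bz|\le s-|\az|}\frac{(-2\pi\imath y\cdot x)^{|\bz|}}{|\bz|!}\text{-type terms}\big|\lesssim(|y|\,|x|)^{s-|\az|+1}$, valid since the exponential is smooth with bounded derivatives on the compact support; combined with $|y|^{|\az|}|g(y)|$ and integrated over the unit-measure support, Hölder's inequality in $L^r$ yields $|\partial^\az\widehat g(x)|\lesssim\|g\|_{L^r(\rn)}\,|x|^{s-|\az|+1}$, with the implicit constant depending only on $A$ (through the fixed ellipsoid $\Delta$ and the diameter of its support) and on $s$. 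Unwinding $g=D_A^{i_0}a$ and $\|g\|_{L^r(\rn)}=b^{-i_0/r}\|a\|_{L^r(\rn)}$ gives exactly the asserted inequality. The main obstacle I anticipate is the bookkeeping around the \emph{translation} $x_0$: the moment conditions are stated for $a$ centered through the monomials $x^\gz$, but the support of $g$ is a translate of $\Delta$ away from the origin, so I must verify that the polynomial reproduction used in the Taylor-remainder argument genuinely only needs the degree-$\le s$ moments in the form stated, and that the constant from bounding $|y|$ over $\supp g$ can be taken independent of $i_0$ and $x_0$ — this works because after the substitution $y=A^{-i_0}(z-x_0)$-type reasoning the relevant variable lives in the fixed set $\Delta$, whose diameter depends only on $A$.
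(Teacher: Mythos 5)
Your argument is essentially the paper's own proof: the authors likewise expand $e^{-2\pi\imath\xi\cdot x}$ to order $s-|\az|$ in $\xi$, use the (linearly invariant) vanishing moments to subtract the Taylor polynomial $P$, bound the remainder by a constant times $(|x|\,|\xi|)^{s-|\az|+1}$, and conclude with H\"older's inequality on the unit-volume support, which produces the factor $b^{-i_0/r}\|a\|_{L^r(\rn)}$ exactly as you describe; the ``$1$'' branch is the same trivial bound plus H\"older. The translation bookkeeping you flag as the main obstacle is dispatched in the paper simply by normalizing $x_0=\mathbf{0}$ at the outset (so that $\supp (D_A^{i_0}a)\subset B_0$ and $|\xi|\lesssim 1$ on the support, with constants depending only on $A$), which is genuinely harmless in the only case the lemma is later invoked, namely $\az=\mathbf{0}$, since translating $a$ merely multiplies $\mathscr{F}D_A^{i_0}a$ by a unimodular exponential and so does not change the quantity being estimated.
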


\begin{proof}
Without loss of generality, we may take $x_0=\mathbf{0}$. This implies that $\supp(D_{A}^{i_0}a)\subset B_0$.
Then, for any $\alpha\in\zz_+^n$ with $|\az|\leq s$ and $x\in\rn$, we easily obtain
$$\partial^\az\lf(\mathscr{F}D_{A}^{i_0}a\r)(x)=\int_{B_0}
(-2\pi\imath\xi)^\az\lf(D_{A}^{i_0}a\r)(\xi)e^{-2\pi\imath \xi\cdot x}\,d\xi.$$
Let $P$ be the degree $s-|\az|$ Taylor expansion polynomial of the function
$e^{-2\pi\imath \xi\cdot x}$ centered at the origin $\mathbf{0}$ respect to $\xi$.
Applying Definition \ref{3d3}(iii), the Taylor remainder theorem and the H\"{o}lder
inequality, we have
\begin{align*}
\lf|\partial^\az\lf(\mathscr{F}D_{A}^{i_0}a\r)(x)\r|
&=\lf|\int_{B_0}(-2\pi\imath\xi)^\az
\lf(D_{A}^{i_0}a\r)(\xi)e^{-2\pi\imath \xi\cdot x}\,d\xi\r|\\
&=\lf|\int_{B_0}(-2\pi\imath\xi)^\az
\lf(D_{A}^{i_0}a\r)(\xi)\lf[e^{-2\pi\imath \xi\cdot x}-P(\xi)\r]\,d\xi\r|\noz\\
&\ls\int_{B_0}|\xi^\az|
\lf|a\lf(A^{i_0}\xi\r)\r||x|^{s-|\az|+1}|\xi|^{s-|\az|+1}\,d\xi\noz\\
&\ls|x|^{s-|\az|+1}\int_{B_0}
\lf|a\lf(A^{i_0}\xi\r)\r||\xi|^{s+1}\,d\xi\noz\\
&\ls|x|^{s-|\az|+1}b^{-i_0}\int_{B_{i_0}}|a(\xi)|\,d\xi\noz\\
&\ls|x|^{s-|\az|+1}b^{-i_0/r}\|a\|_{L^r(\rn)}.\noz
\end{align*}

The other estimate can be directly deduced from the H\"{o}lder inequality, namely,
\begin{align*}
\lf|\partial^\az\lf(\mathscr{F}D_{A}^{i_0}a\r)(x)\r|
&=\lf|\int_{B_0}(-2\pi\imath\xi)^\az
\lf(D_{A}^{i_0}a\r)(\xi)e^{-2\pi\imath \xi\cdot x}\,d\xi\r|\\
&\ls\int_{B_0}
|\xi|^{|\az|}\lf|a\lf(A^{i_0}\xi\r)\r|\,d\xi
\ls b^{-i_0}\int_{B_{i_0}}|a(\xi)|\,d\xi
\ls b^{-i_0/r}\|a\|_{L^r(\rn)}.
\end{align*}
This finishes the proof of Lemma \ref{3l1}.
\end{proof}

Applying Lemma \ref{3l1}, we obtain a uniform estimate for $(\vec{p},r,s)$-atoms as follows, which plays a key role in the proof of Theorem \ref{3t1}.

\begin{lemma}\label{3l2}
Let $\vec{p}\in{(0,1]}^n$ and let $r$ and $s$ be as
in Definition \ref{3d3}. Then there exists a positive constant $C$ such that,
for any $(\vec{p},r,s)$-atom $a$ and $x\in\rn$,
\begin{align}\label{3e5}
\lf|\widehat{a}(x)\r|\leq C\max\lf\{\lf[\rho_*(x)\r]^{\frac1{p_-}-1},\,
\lf[\rho_*(x)\r]^{\frac1{p_+}-1}\r\},
\end{align}
where $\rho_*$ is as in Theorem \ref{3t1}.
\end{lemma}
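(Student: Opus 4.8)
The plan is to deduce the estimate \eqref{3e5} from Lemma \ref{3l1} by exploiting the anisotropic homogeneity of the Fourier transform under the dilation structure. First I would fix a $(\vec{p},r,s)$-atom $a$ supported in $x_0+B_{i_0}$ and use the scaling identity
$$\widehat{a}(x)=b^{i_0}\lf(D_{A^*}^{i_0}\mathscr{F}D_{A}^{i_0}a\r)(x)=b^{i_0}\lf(\mathscr{F}D_{A}^{i_0}a\r)\lf((A^*)^{i_0}x\r)$$
recorded just after Theorem \ref{3t1}, so that controlling $\widehat{a}$ reduces to controlling $\mathscr{F}D_A^{i_0}a$ at the rescaled point $(A^*)^{i_0}x$. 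Taking $\az=\mathbf{0}$ in Lemma \ref{3l1} gives
$$\lf|\widehat{a}(x)\r|\ls b^{i_0}b^{-i_0/r}\|a\|_{L^r(\rn)}\min\lf\{1,\lf|(A^*)^{i_0}x\r|^{s+1}\r\},$$
and then the size condition Definition \ref{3d3}(ii) converts $b^{i_0-i_0/r}\|a\|_{L^r(\rn)}$ into $b^{i_0}/\|\mathbf{1}_{B_{i_0}}\|_{\lv}$. The key auxiliary computation is therefore to evaluate $\|\mathbf{1}_{B_{i_0}}\|_{\lv}$ explicitly in terms of $b^{i_0}$, which, because $B_{i_0}=A^{i_0}\Delta$ is an anisotropic dilate of the fixed ellipsoid and the mixed norm factorizes coordinatewise, should yield a bound of the form $\|\mathbf{1}_{B_{i_0}}\|_{\lv}\gtrsim b^{i_0(1/p_-+\cdots)}$ — I expect the relevant exponent to be $1/p_+$ for large $i_0$ and $1/p_-$ for small $i_0$, or vice versa, matching the two branches of the $\max$.

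The decisive step is to translate everything into the step quasi-norm $\rho_*$ of $A^*$ and then split into two regimes according to the sign of $i_0$ (equivalently, according to whether $\rho_*(x)$ is large or small relative to $b^{-i_0}$). The governing relation is that $|(A^*)^{i_0}x|$ and $\rho_*(x)$ are linked through $\rho_*((A^*)^{i_0}x)=b^{i_0}\rho_*(x)$, together with the standard equivalences between the Euclidean norm and powers of $\rho_*$ on the shells $B_j^*\setminus B_{j-1}^*$; these come from the eigenvalue bounds $\lambda_-,\lambda_+$ in Definition \ref{2d1}, giving $|y|\sim[\rho_*(y)]^{\theta}$ with $\theta$ ranging between $\frac{\ln\lz_-}{\ln b}$ and $\frac{\ln\lz_+}{\ln b}$. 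The natural tactic is to choose, for each fixed $x$, the integer $i_0$ that optimizes the product $\frac{b^{i_0}}{\|\mathbf{1}_{B_{i_0}}\|_{\lv}}\min\{1,|(A^*)^{i_0}x|^{s+1}\}$; since the estimate must hold uniformly over all atoms (hence all admissible $i_0$), I would instead take the supremum over $i_0\in\zz$ and show that this supremum is dominated by the right-hand side of \eqref{3e5}. When $\rho_*(x)\ge1$ the $\min$ equals $1$ and the worst $i_0$ pushes the scale so that the volume factor produces the exponent $\frac1{p_-}-1$; when $\rho_*(x)<1$ one must use the vanishing-moment gain $|(A^*)^{i_0}x|^{s+1}$ to ensure convergence, and the choice \eqref{3e2} of $s$ (guaranteeing $s+1>(\frac1{p_-}-1)\frac{\ln b}{\ln\lz_-}$) is exactly what forces the relevant geometric series to sum, yielding the exponent $\frac1{p_+}-1$.

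The main obstacle I anticipate is the bookkeeping of the two competing exponents and verifying that the sup over $i_0$ lands on the correct branch of the $\max$ in each regime; in particular, correctly computing $\|\mathbf{1}_{B_{i_0}}\|_{\lv}$ for the anisotropic dilated ball, and tracking how the mixed exponents $p_1,\dots,p_n$ collapse into $p_-$ and $p_+$, is where the delicate work lies. A secondary technical point is passing cleanly between the Euclidean norm $|(A^*)^{i_0}x|$ that appears in Lemma \ref{3l1} and the quasi-norm $\rho_*$ that appears in the target inequality, which requires invoking the norm–quasi-norm equivalence with the sharp exponents $\frac{\ln\lz_\pm}{\ln b}$ uniformly in $i_0$. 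Once the volume estimate and the shell equivalences are in hand, the remaining algebra is a routine optimization, and the role of the admissibility condition \eqref{3e2} on $s$ is precisely to keep the small-$\rho_*$ series convergent so that the bound $[\rho_*(x)]^{\frac1{p_+}-1}$ emerges.
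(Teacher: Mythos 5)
Your plan follows the paper's proof step for step: the scaling identity reduces $\widehat{a}(x)$ to $b^{i_0}(\mathscr{F}D_A^{i_0}a)((A^*)^{i_0}x)$, Lemma \ref{3l1} with $\az=\mathbf{0}$ plus the size condition gives $|\widehat{a}(x)|\ls b^{i_0}\|\mathbf{1}_{B_{i_0}}\|_{\lv}^{-1}\min\{1,|(A^*)^{i_0}x|^{s+1}\}$, and the split according to $\rho_*(x)\le b^{-i_0}$ versus $\rho_*(x)>b^{-i_0}$, combined with $\rho_*((A^*)^{i_0}x)=b^{i_0}\rho_*(x)$, Lemma \ref{3l3}, and the admissibility condition on $s$, closes the argument. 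Two cosmetic remarks: no optimization or geometric series over $i_0$ is involved (the bound is proved for each fixed $i_0$, and both branches of the $\max$ are kept in both regimes), and the operative dichotomy is $\rho_*(x)$ versus $b^{-i_0}$, not $\rho_*(x)$ versus $1$ nor the sign of $i_0$.

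The one step you do not actually supply is the crux you yourself flag: the lower bound $\|\mathbf{1}_{B_{i_0}}\|_{\lv}\gtrsim\min\{b^{i_0/p_-},\,b^{i_0/p_+}\}$. Your proposed mechanism --- that the mixed norm ``factorizes coordinatewise'' because $B_{i_0}=A^{i_0}\Delta$ is a dilate of a fixed ellipsoid --- does not work for a general dilation: $A$ need not be diagonal, so $B_{i_0}$ is a tilted ellipsoid and the iterated integrals in Definition \ref{2d4} do not decouple. The paper obtains the bound by a two-regime argument in $i_0$: for $i_0$ beyond some threshold $K$ it bounds $\|\mathbf{1}_{B_{i_0}}\|_{\lv}$ from below by the $L^{p_+}(\rn)$ norm $b^{i_0/p_+}$ (using only $|B_{i_0}|=b^{i_0}$ and monotonicity of the iterated norm once the relevant slices have measure at least one), while for $i_0\le K$ it invokes the growth estimate of \cite[Lemma 6.8]{hlyy20}, namely $\|\mathbf{1}_{B_K}\|_{\lv}/\|\mathbf{1}_{B_{i_0}}\|_{\lv}\ls b^{(K-i_0)(1+\var)/p_-}$, and then lets $\var\to 0$. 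Without an input of this kind your argument does not close; with it, everything else you describe goes through exactly as in the paper.
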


The following inequalities will be used to prove Lemma \ref{3l2}, which are just \cite[p.\,11, Lemma 3.2]{bow03}.

\begin{lemma}\label{3l3}
Let $A$ be a given dilation. There exists a positive constant $C$ such that, for any $x\in\rn$,
\begin{equation*}
\frac{1}{C} [\rho(x)]^{\ln \lambda_-/\ln b} \leq \left|x\right|
\leq C[\rho(x)]^{\ln \lambda_+/\ln b}\qquad { when}\ \rho(x)\in(1,\fz),
\end{equation*}
and
\begin{equation*}
\frac{1}{C} [\rho(x)]^{\ln \lambda_+/\ln b} \leq \left|x\right|
\leq C[\rho(x)]^{\ln \lambda_-/\ln b}\qquad { when}\ \rho(x)\in[0,1],
\end{equation*}
where $\lambda_-$ and $\lambda_+$ are as in Definition \ref{2d1}.
\end{lemma}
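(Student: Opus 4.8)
The plan is to reduce the two-sided comparison to a single pair of \emph{uniform operator-norm estimates} for the powers of $A$, and then to read off the claimed bounds from the definition of the step homogeneous quasi-norm. First I would establish that there is a constant $C\in[1,\fz)$ such that, for every $j\in\zz_+$,
\begin{align*}
\lf\|A^j\r\|\le C\lz_+^{\,j}\qquad\text{and}\qquad\lf\|A^{-j}\r\|\le C\lz_-^{-j},
\end{align*}
where $\|\cdot\|$ denotes the operator norm on $\rn$; equivalently, for all $x\in\rn$ and $j\in\zz_+$,
\begin{align*}
\frac1C\lz_-^{\,j}|x|\le\lf|A^jx\r|\le C\lz_+^{\,j}|x|.
\end{align*}
The upper bound is immediate from the spectral-radius formula, since $\lim_{j\to\fz}\|A^j\|^{1/j}=|\lz_n|<\lz_+$ by Definition \ref{2d1}; the lower bound follows by applying the same reasoning to $A^{-1}$, whose spectral radius equals $|\lz_1|^{-1}<\lz_-^{-1}$, together with $|x|=|A^{-j}A^jx|\le\|A^{-j}\|\,|A^jx|$.

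Next I would record the comparability of the ellipsoid $\Delta$ with Euclidean balls: as $\Delta$ is a bounded open neighbourhood of the origin, there exist $0<r_1\le r_2<\fz$ with $\{y\in\rn:\ |y|<r_1\}\subset\Delta\subset\{y\in\rn:\ |y|<r_2\}$. This is precisely what converts membership of $x$ in the dilated balls $B_i=A^i\Delta$ into Euclidean size information.

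For the bookkeeping, fix $x\neq\mathbf0$ and let $i\in\zz$ be the unique integer with $\rho(x)=b^i$, so that $x\in A^{i+1}\Delta$ and $x\notin A^i\Delta$. From $x\in A^{i+1}\Delta$ I get $A^{-(i+1)}x\in\Delta$, hence $|x|\le r_2\|A^{i+1}\|$; from $x\notin A^i\Delta\supset A^i\{y:\ |y|<r_1\}$ I get $|A^{-i}x|\ge r_1$, hence $|x|\ge r_1\|A^{-i}\|^{-1}$. Inserting the estimates of the first step into these two inequalities, splitting into the cases $i\ge1$ (so $\rho(x)>1$) and $i\le0$ (so $\rho(x)\le1$), using the monotonicity of $t\mapsto\lz^{\,t}$ to pass between the $\lz_+$- and $\lz_-$-rates where convenient, and finally invoking the identity $\lz_\pm^{\,i}=[\rho(x)]^{\ln\lz_\pm/\ln b}$ (valid because $\rho(x)=b^i$), yields exactly the two displayed chains; the remaining case $\rho(x)=0$ forces $x=\mathbf0$ and is trivial.

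The crux is the first step, and specifically the \emph{uniformity in direction}: the eigenvalue inequalities by themselves do not give $\|A^j\|\le C\lz_+^{\,j}$, since nontrivial Jordan blocks of $A$ contribute factors that are polynomial in $j$. The role of the \emph{strict} gaps $\lz_-<|\lz_1|$ and $|\lz_n|<\lz_+$ in Definition \ref{2d1} is exactly to absorb these subexponential factors, so that the geometric rates $\lz_\pm^{\pm j}$ dominate uniformly in $x$; passing to the (real) Jordan form of $A$ makes this quantitative. Once this is in hand, the remaining steps are routine.
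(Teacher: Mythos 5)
Your proof is correct, and it is essentially the standard argument: the uniform operator-norm bounds $\|A^j\|\le C\lz_+^{\,j}$ and $\|A^{-j}\|\le C\lz_-^{-j}$ for $j\in\zz_+$ (obtained from the spectral-radius formula, with the strict gaps $\lz_-<|\lz_1|$ and $|\lz_n|<\lz_+$ absorbing the polynomial Jordan-block factors), combined with sandwiching the ellipsoid $\Delta$ between two Euclidean balls and the identity $\lz_\pm^{\,i}=[\rho(x)]^{\ln\lz_\pm/\ln b}$ for $x\in B_{i+1}\setminus B_i$. The paper itself gives no proof of Lemma \ref{3l3} but quotes it directly from \cite[p.\,11, Lemma 3.2]{bow03}, where the proof runs along exactly these lines, so your proposal coincides with the source argument.
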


We now give the proof of Lemma \ref{3l2}.

\begin{proof}[Proof of Lemma \ref{3l2}]
Let $a$ be a $(\vec{p},r,s)$-atom supported in $x_0+B_{i_0}$ with some $x_0\in\rn$
and $i_0\in\zz$. Without loss of generality, we may assume $x_0=\mathbf{0}$.
By Lemma \ref{3l1} with $\az=(\overbrace{0,\ldots,0}^{n\ \rm times})$, we find that,
for any $x\in\rn$,
\begin{align}\label{3e3}
\lf|\widehat{a}(x)\r|
&=\lf|b^{i_0}\lf(D_{A^*}^{i_0}\mathscr{F}D_{A}^{i_0}a\r)(x)\r|
=\lf|b^{i_0}\lf(\mathscr{F}D_{A}^{i_0}a\r)\lf((A^*)^{i_0}x\r)\r|\\
&\ls b^{i_0}b^{-i_0/r}\|a\|_{L^r(\rn)}\min\lf\{1,\lf|(A^*)^{i_0}x\r|^{s+1}\r\}\noz\\
&\ls b^{i_0}\lf\|\mathbf{1}_{B_{i_0}}\r\|_{\lv}^{-1}
\min\lf\{1,\lf|(A^*)^{i_0}x\r|^{s+1}\r\}\noz.
\end{align}

Next, we show that
\begin{align}\label{3e4}
\lf\|\mathbf{1}_{B_{i_0}}\r\|_{\lv}^{-1}\ls \max\lf\{b^{-\frac{i_0}{p_-}},\,b^{-\frac{i_0}{p_+}}\r\}.
\end{align}
Indeed, there exists a $K\in\zz$ large enough such that, if $i_0\in(K,\fz)\cap\zz$, then
\begin{align*}
\lf\|\mathbf{1}_{B_{i_0}}\r\|_{\lv}
&=\lf( \int_\rr\cdots \lf(\int_\rr|\mathbf{1}_{B_{i_0}}|^{p_1} d x_1\r)^{\frac{p_2}{p_1}}\cdots dx_n \r)^{\frac{1}{p_n}}\\
&\ge\lf(\int_\rr \cdots\lf(\int_\rr|\mathbf{1}_{B_{i_0}}|^{p_+} d x_1\r)^{\frac{p_+}{p_+}}\cdots dx_n \r)^{\frac{1}{p_+}}
=b^{\frac{i_0}{p_+}}.
\end{align*}
On the other hand, if $i_0\in(-\fz,K]$, by \cite[Lemma 6.8]{hlyy20}, we conclude that, for any $\varepsilon\in(0,1)$,
$$\frac{\|\mathbf{1}_{B_{K}}\|_{\lv}}{\|\mathbf{1}_{B_{i_0}}\|_{\lv}}
\ls b^{(K-i_0)\frac{1+\varepsilon}{p_-}}.$$
Letting $\varepsilon\to0$, we have
$$\lf\|\mathbf{1}_{B_{i_0}}\r\|_{\lv}^{-1} \ls \frac{b^\frac{K}{p_-}}{\|\mathbf{1}_{B_{K}}\|_{\lv}}b^{-\frac{i_0}{p_-}}.$$
Thus, \eqref{3e4} holds true. From this and \eqref{3e3}, it follows that, for any $x\in\rn$,

\begin{align}\label{3e6}
\lf|\widehat{a}(x)\r|
\ls b^{i_0}\max\lf\{b^{-\frac{i_0}{p_-}},\,b^{-\frac{i_0}{p_+}}\r\}
\min\lf\{1,\lf|(A^*)^{i_0}x\r|^{s+1}\r\}.
\end{align}
We next prove \eqref{3e5} by considering two cases:
$\rho_*(x)\leq b^{-i_0}$ and $\rho_*(x)>b^{-i_0}$.

\emph{Case 1).} $\rho_{*}(x)\leq b^{-i_0}$. In this case, note that
$\rho_*((A^*)^{i_0}x)\leq 1$. From \eqref{3e6}, Lemma \ref{3l3} and the fact that
$$1-\frac1{p_+}+(s+1)\frac{\ln\lambda_-}{\ln b}
\geq1-\frac1{p_-}+(s+1)\frac{\ln\lambda_-}{\ln b}>0,$$
we deduce that, for any $x\in\rn$ satisfying $\rho_*(x)\leq b^{-i_0}$,
\begin{align}\label{3e7}
\lf|\widehat{a}(x)\r|
&\ls b^{i_0}\max\lf\{b^{-\frac{i_0}{p_-}},\,b^{-\frac{i_0}{p_+}}\r\}
\lf[\rho_*\lf((A^*)^{i_0}x\r)\r]^{(s+1)\frac{\ln\lambda_-}{\ln b}}\\
&\sim \max\lf\{b^{i_0[1-\frac1{p_-}+(s+1)\frac{\ln\lambda_-}{\ln b}]},\,
b^{i_0[1-\frac1{p_+}+(s+1)\frac{\ln\lambda_-}{\ln b}]}\r\}
\lf[\rho_*(x)\r]^{(s+1)\frac{\ln\lambda_-}{\ln b}}\noz\\
&\ls\max\lf\{\lf[\rho_*(x)\r]^{\frac1{p_-}-1},\,
\lf[\rho_*(x)\r]^{\frac1{p_+}-1}\r\}.\noz
\end{align}
 This shows \eqref{3e5} for Case 1).

\emph{Case 2).} $\rho_*(x)>b^{-i_0}$. In this case, note that $\rho_*((A^*)^{i_0}x)>1$.
Using \eqref{3e6}, Lemma \ref{3l3} again and the fact that
$$\frac1{p_-}-1\geq\frac1{p_+}-1\geq0,$$
it is easy to see that, for any $x\in\rn$ satisfying $\rho_*(x)>b^{-i_0}$,
\begin{align*}
\lf|\widehat{a}(x)\r|
&\ls b^{i_0}\max\lf\{b^{-\frac{i_0}{p_-}},\,b^{-\frac{i_0}{p_+}}\r\}
\sim\max\lf\{b^{(1-\frac1{p_-})i_0},\,b^{(1-\frac1{p_+})i_0}\r\}\\
&\ls\max\lf\{\lf[\rho_*(x)\r]^{\frac1{p_-}-1},\,
\lf[\rho_*(x)\r]^{\frac1{p_+}-1}\r\},
\end{align*}
which completes the proof of \eqref{3e5} and hence of Lemma \ref{3l2}.
\end{proof}

\begin{lemma}\label{3l5}
Let $\vec{p}\in(0,1]^n$. Then, for any $\{\lz_i\}_{i\in\nn}\subset\mathbb{C}$ and $\{B^{(i)}\}_{i\in\nn}\subset\mathfrak{B}$,
$$\sum_{i\in\nn}|\lz_i|\le \lf\|\lf\{\sum_{i\in\nn}
\lf[\frac{|\lz_i|\mathbf{1}_{B^{(i)}}}{\|\mathbf{1}_{B^{(i)}}\|_{\lv}}\r]^
{\underline{p}}\r\}^{1/{\underline{p}}}\r\|_{\lv},$$
where $\underline{p}$ is as in \eqref{2e3}.
\end{lemma}

\begin{proof}
Observe that, for any $\{\lz_i\}_{i\in\nn}\subset\mathbb{C}$ and $\gamma\in(0,1]$,
\begin{align}\label{3e8}
\lf(\sum_{i=1}^{\fz}|\lz_i|\r)^{\gamma}\le \sum_{i=1}^{\fz}|\lz_i|^{\gamma}.
\end{align}
By this and the inverse Minkovski inequality, we know that
\begin{align*}
\lf\|\lf\{\sum_{i=1}^{\fz}
\lf[\frac{|\lz_i|\mathbf{1}_{B^{(i)}}}{\|\mathbf{1}_{B^{(i)}}\|_{\lv}}\r]^
{\underline{p}}\r\}^{1/{\underline{p}}}\r\|_{\lv}
&\ge\lf\|\sum_{i=1}^{\fz}
\frac{|\lz_i|\mathbf{1}_{B^{(i)}}}{\|\mathbf{1}_{B^{(i)}}\|_{\lv}}\r\|_{\lv}\\
&\ge\lf\|\sum_{i=1}^{N}
\frac{|\lz_i|\mathbf{1}_{B^{(i)}}}{\|\mathbf{1}_{B^{(i)}}\|_{\lv}}\r\|_{\lv}
\ge\sum_{i=1}^{N}|\lz_i|.
\end{align*}
Letting $N\to\fz$, we obtain the desired inequality as in Lemma \ref{3l5}.
\end{proof}

To show Theorem \ref{3t1}, we also need the following atomic characterizations
of $\vh$, which is just \cite[Theorem 4.7]{hlyy20}.

\begin{lemma}\label{3l4}
Let $\vec{p}\in{(0,\fz)}^n$, $r\in(\max\{p_+,1\},\fz]$ with $p_+$
as in \eqref{2e3}, $s$ be as in \eqref{3e2} and
$$N\in\mathbb{N}\cap\lf[\lf\lfloor\lf(\frac1{{{\rm min}\{p_-,1\}}}-1\r)
\frac{\ln b}{\ln\lambda_-}\r\rfloor+2,\fz\r)$$
with $p_-$ as in \eqref{2e3}.
Then $\vh=\vah$ with equivalent quasi-norms.
\end{lemma}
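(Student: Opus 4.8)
The plan is to establish the two continuous inclusions $\vah\hookrightarrow\vh$ and $\vh\hookrightarrow\vah$ separately, each with the corresponding quasi-norm comparison; together they give the asserted identification with equivalent quasi-norms. The first inclusion is the softer one and is driven by a single uniform maximal estimate for atoms, whereas the second requires an anisotropic mixed-norm Calder\'on--Zygmund decomposition.

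For $\vah\hookrightarrow\vh$, the key step is to show that every $(\vec p,r,s)$-atom $a$ supported in $B:=x_0+B_{i_0}\in\B$ satisfies, for all $x\in\rn$,
$$M_\varphi(a)(x)\ls\lf\|\mathbf{1}_B\r\|_{\lv}^{-1}\lf[\HL(\mathbf{1}_B)(x)\r]^{\lambda}$$
for some fixed $\lambda>1/\underline p$. This packages two estimates: a local bound on a fixed dilate of $B$ coming from $M_\varphi(a)\ls\HL(a)$ together with the size condition \ref{3d3}(ii) (the condition $r>\max\{p_+,1\}$ guarantees the requisite maximal-function bounds), and an off-diagonal decay on the complement obtained from the vanishing moments \ref{3d3}(iii) by subtracting the degree-$s$ Taylor polynomial of $\varphi_k$ centred at $x_0$, in the spirit of Lemma \ref{3l1} but now expanding $\varphi_k$ rather than the exponential. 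The condition \eqref{3e2} on $s$, read through Lemma \ref{3l3}, guarantees precisely that the resulting decay exponent $\lambda$ exceeds $1/\underline p$. Given a decomposition $f=\sum_i\lz_ia_i$, one then has $M_\varphi(f)\le\sum_i|\lz_i|M_\varphi(a_i)$; taking $\underline p$-th powers and using the elementary subadditivity \eqref{3e8}, followed by the mixed-norm vector-valued Fefferman--Stein inequality for $\HL$ with vector exponent $q:=\lambda\underline p>1$ on the rescaled exponent $\vec p/\underline p$, transfers the sum of maximal functions back to $\sum_i[|\lz_i|\mathbf{1}_{B^{(i)}}/\|\mathbf{1}_{B^{(i)}}\|_{\lv}]^{\underline p}$ and yields $\|f\|_{\vh}\ls\|f\|_{\vah}$.

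For the reverse inclusion $\vh\hookrightarrow\vah$ I would first pass, via the equivalence of maximal functions in the mixed-norm anisotropic theory, from the radial maximal function to the grand (nontangential) maximal function $M_N^0$ of order $N$, so that $\|f\|_{\vh}\sim\|M_N^0(f)\|_{\lv}$. I would then carry out the anisotropic Calder\'on--Zygmund decomposition of $f$ at the heights $b^j$, $j\in\zz$: set $\Omega_j:=\{x\in\rn:\ M_N^0(f)(x)>b^j\}$, cover the open set $\Omega_j$ by a Whitney-type family $\{B_{j,k}\}_k\subset\B$ of boundedly overlapping dilated balls comparable to their distance to $\Omega_j^\complement$, and take a smooth partition of unity $\{\zeta_{j,k}\}_k$ subordinate to it. Writing $b_{j,k}:=(f-P_{j,k})\zeta_{j,k}$, where $P_{j,k}$ projects $f$ onto polynomials of degree $\le s$ in the $\zeta_{j,k}$-weighted inner product, each $b_{j,k}$ has vanishing moments up to order $s$ and obeys $\|b_{j,k}\|_{\lfz}\ls b^j$ (this pointwise bound is exactly where the grand maximal control near $\Omega_j^\complement$ is used). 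Putting $\lz_{j,k}:=Cb^j\|\mathbf{1}_{B_{j,k}}\|_{\lv}$ and $a_{j,k}:=\lz_{j,k}^{-1}b_{j,k}$, one checks that each $a_{j,k}$ is a $(\vec p,\fz,s)$-atom, hence a $(\vec p,r,s)$-atom for every admissible $r$, that $f=\sum_{j,k}\lz_{j,k}a_{j,k}$ in $\cs'(\rn)$, and finally that
$$\lf\|\lf\{\sum_{j,k}\lf[\tfrac{|\lz_{j,k}|\mathbf{1}_{B_{j,k}}}{\|\mathbf{1}_{B_{j,k}}\|_{\lv}}\r]^{\underline p}\r\}^{1/\underline p}\r\|_{\lv}\ls\lf\|\lf(\sum_{j}b^{j\underline p}\mathbf{1}_{\Omega_j}\r)^{1/\underline p}\r\|_{\lv}\ls\lf\|M_N^0(f)\r\|_{\lv}\sim\|f\|_{\vh},$$
where the first inequality uses the bounded overlap $\sum_k\mathbf{1}_{B_{j,k}}\ls\mathbf{1}_{\Omega_j}$ and the second uses the geometric summation of $b^{j\underline p}\mathbf{1}_{\Omega_j}$ over the nested superlevel sets of $M_N^0(f)$.

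The main obstacle is the genuinely mixed-norm summation, which cannot be handled by scalar boundedness of $\HL$ plus the triangle inequality (this is far too lossy when $p_i<1$). Both inclusions hinge on the vector-valued Fefferman--Stein inequality $\|(\sum_i(\HL g_i)^q)^{1/q}\|_{\lv}\ls\|(\sum_i|g_i|^q)^{1/q}\|_{\lv}$ for the anisotropic Hardy--Littlewood operator in the mixed-norm scale $\lv$ with $\vec p\in(0,\fz)^n$ and $q\in(1,\fz]$; because $\lv$ is only a quasi-norm and some $p_i$ may be below $1$, the standard duality route is unavailable and one must instead argue by iterating the one-dimensional weighted inequalities along the coordinate directions. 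The second delicate point is bookkeeping the Whitney balls $B_{j,k}$ across different heights $j$ so that the error series $\sum_{j,k}P_{j,k}\zeta_{j,k}$ converges in $\cs'(\rn)$ and the atoms are genuinely of the stated order; this is exactly where the lower bound $N\ge\lfloor(\frac1{\underline p}-1)\frac{\ln b}{\ln\lambda_-}\rfloor+2$ on the maximal-function order, together with the compatible range \eqref{3e2} for $s$, is consumed through Lemma \ref{3l3}.
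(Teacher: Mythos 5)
You are attempting to prove a result the paper does not prove at all: the authors quote Lemma \ref{3l4} verbatim as \cite[Theorem 4.7]{hlyy20}, so the relevant comparison is with the proof in that reference, which follows Bownik's scheme for $H^p_A(\rn)$ adapted to mixed norms. Your overall architecture (an atom-to-maximal estimate combined with a vector-valued Fefferman--Stein inequality on $\lv$ for $\vah\hookrightarrow\vh$; an anisotropic Calder\'on--Zygmund/Whitney decomposition with polynomial projections producing $(\vec p,\fz,s)$-atoms for the converse) matches that proof. But two of your load-bearing claims are false as stated. First, the ``single uniform maximal estimate'' $M_\varphi(a)\ls\|\mathbf{1}_B\|_{\lv}^{-1}[\HL(\mathbf{1}_B)]^{\lambda}$ cannot hold for $r<\fz$: for $x\in B$ one has $\HL(\mathbf{1}_B)(x)\sim1$, so your inequality would force the pointwise bound $M_\varphi(a)\ls\|\mathbf{1}_B\|_{\lv}^{-1}$ on $B$, i.e.\ an $L^\fz$-type bound that a merely $L^r$-normalized atom does not satisfy. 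The moment-driven pointwise decay is valid only off a fixed dilate of $B$; on the dilate the estimate must be carried out in norm, using $M_\varphi(a)\ls\HL(a)$, the boundedness of the maximal operator, and a mixed-norm H\"older inequality --- and that local step is precisely where the hypothesis $r\in(\max\{p_+,1\},\fz]$ is consumed, a hypothesis your sketch invokes but never actually uses.

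Second, in the Calder\'on--Zygmund direction, $b_{j,k}:=(f-P_{j,k})\zeta_{j,k}$ is in general only a tempered distribution (so $\|b_{j,k}\|_{\lfz}$ need not make sense), and even for locally integrable $f$ the bound $\|b_{j,k}\|_{\lfz}\ls b^j$ is wrong: $\Omega_j$ is exactly the set where $M_N^0(f)>b^j$, so $f$ is \emph{large} there, not controlled by $b^j$. What the grand-maximal control from $\Omega_j^\complement$ actually bounds by $Cb^j$ is the good part $g_j:=f-\sum_k b_{j,k}$. As a consequence your proposed decomposition $f=\sum_{j,k}\lz_{j,k}a_{j,k}$ with $a_{j,k}\sim b_{j,k}$ cannot reproduce $f$: since $\sum_k b_{j,k}=f-g_j$ for each fixed $j$, summing over all $j\in\zz$ diverges. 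The correct construction writes $f=\sum_{j\in\zz}(g_{j+1}-g_j)$ in $\cs'(\rn)$ (using $g_j\to f$ as $j\to\fz$ and $g_j\to0$ as $j\to-\fz$, where the order $N$ of the grand maximal function enters), and then re-decomposes each difference $g_{j+1}-g_j$ --- which does satisfy $\|g_{j+1}-g_j\|_{\lfz}\ls b^j$ and is supported near $\Omega_j$ --- over the level-$j$ Whitney balls, introducing a second layer of correction polynomials to restore the vanishing moments of each localized piece. This double-indexed bookkeeping is the technical heart of the cited proof and is precisely what is missing from your outline; fixing these two points would essentially reconstruct the argument of \cite{hlyy20}.
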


We now prove Theorem \ref{3t1}.

\begin{proof}[Proof of Theorem \ref{3t1}]
Let $\vec{p}\in{(0,1]}^n$, $r\in(\max\{p_+,1\},\fz]$, $s$ be as in \eqref{3e2}
and $f\in\vh$. Without loss of generality, we may assume that $\|f\|_{\vh}>0$. Then, by Lemma \ref{3l4} and Definition \ref{3d3}(\uppercase\expandafter{\romannumeral2}), we find that
there exist a sequence
$\{\lz_i\}_{i\in\nn}\subset\mathbb{C}$ and a sequence of $(\vec{p},r,s)$-atoms $\{a_i\}_{i\in\nn}$, supported respectively in
$\{B^{(i)}\}_{i\in\nn}\subset\B$, such that
\begin{align}\label{3e9}
f=\sum_{i\in\nn}\lz_ia_i
\quad\mathrm{in}\quad\cs'(\rn),
\end{align}
and
\begin{align}\label{3e10}
\|f\|_{\vh}\sim \lf\|\lf\{\sum_{i\in\nn}
\lf[\frac{|\lz_i|\mathbf{1}_{B^{(i)}}}{\|\mathbf{1}_{B^{(i)}}\|_{\lv}}\r]^
{\underline{p}}\r\}^{1/\underline{p}}\r\|_{\lv}.
\end{align}
Taking the Fourier transform on the both sides of \eqref{3e9}, we have
\begin{align}\label{3e11}
\widehat{f}=\sum_{i\in\nn}\lz_i\widehat{a_i}
\quad\mathrm{in}\quad\cs'(\rn).
\end{align}
Note that a function $f\in L^1(\rn)$ implies that
$\widehat{f}$ is well defined in $\rn$, so does $\widehat{a_i}$ for any $i\in\nn$. From
Lemmas \ref{3l2} and \ref{3l5}, and \eqref{3e10}, it follows that, for any $x\in\rn$,
\begin{align}\label{3e12}
\sum_{i\in\nn}|\lz_i||\widehat{a_i}(x)|
&\ls\sum_{i\in\nn}|\lz_i|
\max\lf\{\lf[\rho_*(x)\r]^{\frac1{p_-}-1},\,\lf[\rho_*(x)\r]^{\frac1{p_+}-1}\r\}\\
&\ls\|f\|_{\vh}
\max\lf\{\lf[\rho_*(x)\r]^{\frac1{p_-}-1},\,\lf[\rho_*(x)\r]^{\frac1{p_+}-1}\r\}
<\fz.\noz
\end{align}
Therefore, for any $x\in\rn$, the function
\begin{align}\label{3e13}
F(x):=\sum_{i\in\nn}\lz_i\widehat{a_i}(x)
\end{align}
is well defined pointwisely and
\begin{align*}
|F(x)|\ls\|f\|_{\vh}
\max\lf\{\lf[\rho_*(x)\r]^{\frac1{p_-}-1},\,\lf[\rho_*(x)\r]^{\frac1{p_+}-1}\r\}.
\end{align*}

We next show the continuity of the function $F$ on $\rn$. If we can prove that $F$ is continuous on any compact subset of $\rn$, then
the continuity on $\rn$ is obvious. Indeed, for any compact subset $E$, there exists a positive constant $K$, depending only on $A$ and
$E$, such that $\rho_*(x)\le K$ holds for every $x\in E$. By this and \eqref{3e12}, we conclude that, for any $x\in E$,

\begin{align*}
\sum_{i\in\nn}|\lz_i||\widehat{a_i}(x)|
&\ls\max\lf\{K^{\frac1{p_-}-1},\,K^{\frac1{p_+}-1}\r\}\|f\|_{\vh}
<\fz.
\end{align*}
Thus, the summation $\sum_{i\in\nn}\lz_i\widehat{a_i}(\cdot)$ converges uniformly
on $E$. This, together with the fact that, for any $i\in\nn$, $\widehat{a_i}(x)$ is continuous,
implies that $F$ is also continuous on any compact subset $E$ and hence on $\rn$.

Finally, to complete the proof of Theorem \ref{3t1}, by \eqref{3e11} and \eqref{3e13},
we only need to show that
\begin{align}\label{3e14}
F=\sum_{i\in\nn}\lz_i\widehat{a_i}
\quad\mathrm{in}\quad\cs'(\rn).
\end{align}
For this purpose, from Lemma \ref{3l2} and the definition of Schwartz functions, we deduce that,
for any $\varphi\in\cs(\rn)$ and $i\in\nn$,
\begin{align*}
&\lf|\int_{\rn}\widehat{a_i}(x)\varphi(x)\,dx\r|\\
&\quad\leq\sum_{k=1}^\fz\int_{(A^*)^{k+1}B^*_0\setminus(A^*)^{k}B^*_0}
\max\lf\{\lf[\rho_*(x)\r]^{\frac1{p_-}-1},\,\lf[\rho_*(x)\r]^{\frac1{p_+}-1}\r\}
|\varphi(x)|\,dx+\|\varphi\|_{L^1(\rn)}\\
&\quad\ls\sum_{k=1}^\fz b^kb^{k(\frac1{p_-}-1)}b^{-k(\lceil\frac1{p_-}-1\rceil+2)}
+\|\varphi\|_{L^1(\rn)}\\
&\quad\sim\sum_{k=1}^\fz b^{-k}+\|\varphi\|_{L^1(\rn)},
\end{align*}
where $B^*_0$ is the unit dilated ball with respect to $A^*$.
This implies that there exists a positive constant $C$ such that $\lf|\int_{\rn}\widehat{a_i}(x)\varphi(x)\,dx\r|\le C$ holds true
uniformly for any $i\in\zz$. Combining this, Lemma \ref{3l5} and \eqref{3e10}, we have
\begin{align*}
\lim_{I\to\fz}\sum_{i=I+1}^\fz|\lz_i|\lf|\int_{\rn}\widehat{a_i}(x)\varphi(x)\,dx\r|
\ls \lim_{I\to\fz}\sum_{i=I+1}^\fz|\lz_i|=0.
\end{align*}
Therefore, for any $\varphi\in\cs(\rn)$,
$$\langle F,\varphi\rangle
=\lim_{I\to\fz}\lf\langle\sum_{i=1}^I\lz_i\widehat{a_i},\,\varphi\r\rangle.$$
This finishes the proof of \eqref{3e14} and hence of Theorem \ref{3t1}.
\end{proof}

\begin{remark}\label{3r1}
\begin{enumerate}
\item[(i)]
When $\vec{p}=(p,\ldots,p)\in(0,1]^n$, the Hardy space $\vh$ in Theorem \ref{3t1} coincides with
the anisotropic Hardy space $H^p_A(\rn)$ from \cite{bow03}, and the inequality \eqref{3e1}
becomes
\begin{align*}
|F(x)|\le C\|f\|_{H^{p}_A(\rn)}[\rho_\ast(x)]^{\frac1{p}-1}
\end{align*}
with $C$ as in \eqref{3e1}. In this case, Theorem \ref{3t1} is just \cite[Theorem 1]{bw13}.
\item[(ii)]
Let $f\in \vh\cap L^1({\mathbb{R}^n})$. By the inequality \eqref{3e1}
with $x=\mathbf{0}$, we obtain $F=\widehat{f}$ and $\widehat{f}(\mathbf{0})=0$.
Thus, the function $f\in \vh\cap L^1({\mathbb{R}^n})$
has a vanishing moment, which illustrates the necessity of the vanishing moment of
atoms in some sense.
\item[(iii)]
Very recently, in \cite[Theorem 2.4]{hcy21}, Huang et al. obtained a result similar
to Theorem \ref{3t1} in the setting of the anisotropic mixed-norm
Hardy space ${H_{\vec{a}}^{\vec{p}}(\rn)}$, where
$$\vec{a}:=(a_1,\ldots,a_n)\in [1,\fz)^n\quad
{\rm and}\quad \vec{p}:=(p_1,\ldots, p_n)\in (0,1]^n.$$
We should point out that if
$$A:=\begin{pmatrix}
         2^{a_1} & 0 & \cdots & 0 \\
         0 & 2^{a_2} & \cdots & 0 \\
         \vdots & \vdots &\ddots & \vdots \\
         0 & 0 & \cdots & 2^{a_n} \\
       \end{pmatrix},
$$
then $\vh$=${H_{\vec{a}}^{\vec{p}}(\rn)}$ with equivalent quasi-norms.
In this sense, Theorem \ref{3t1} covers \cite[Theorem 2.4]{hcy21} as a special case.
\end{enumerate}
\end{remark}

\section{Applications\label{s4}}

As applications of Theorem \ref{3t1}, we first prove the function $F$ given in Theorem \ref{3t1} has a higher order convergence at the origin. Then we extend the Hardy--Littlewood inequality to the setting of anisotropic mixed-norm
Hardy spaces.

We embark on the proof of the first desired result.

\begin{theorem}\label{4t1}
Let $\vec{p}\in{(0,1]}^n$. Then, for any
$f\in\vh$, there exists a continuous function $F$ on $\rn$ such that
$\widehat{f}=F$ in $\cs'(\rn)$
and
\begin{align}\label{4e1}
\lim_{|x|\to0^+}\frac{F(x)}{[\rho_*(x)]^{\frac1{p_+}{-1}}}=0.
\end{align}
\end{theorem}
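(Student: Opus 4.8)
The plan is to build on Theorem \ref{3t1}, which already furnishes the continuous function $F$ with $\widehat f=F$ in $\cs'(\rn)$ together with the pointwise bound \eqref{3e1}. Observe first that, since $0\le\frac1{p_+}-1\le\frac1{p_-}-1$, for any $x$ with $\rho_*(x)\le1$ one has $[\rho_*(x)]^{\frac1{p_-}-1}\le[\rho_*(x)]^{\frac1{p_+}-1}$, so near the origin \eqref{3e1} reduces to $|F(x)|\ls\|f\|_{\vh}[\rho_*(x)]^{\frac1{p_+}-1}$; this yields only boundedness of the quotient in \eqref{4e1}, not its vanishing. To extract the extra decay I would fix the atomic decomposition $f=\sum_{i\in\nn}\lz_ia_i$ from Lemma \ref{3l4}, write $F=\sum_{i\in\nn}\lz_i\widehat{a_i}$ as in \eqref{3e13}, and split this series, for a threshold $I\in\nn$ to be chosen, as $F=F_1+F_2$ with $F_1:=\sum_{i=1}^I\lz_i\widehat{a_i}$ and $F_2:=\sum_{i=I+1}^\fz\lz_i\widehat{a_i}$. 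The driving fact is that, by Lemma \ref{3l5} and \eqref{3e10}, $\sum_{i\in\nn}|\lz_i|\ls\|f\|_{\vh}<\fz$, so the tail $\sum_{i>I}|\lz_i|$ can be made arbitrarily small.

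For the tail $F_2$ I would invoke Lemma \ref{3l2} directly: for every $x$ with $\rho_*(x)\le1$ the maximum in \eqref{3e5} equals $[\rho_*(x)]^{\frac1{p_+}-1}$, whence $|F_2(x)|\le\sum_{i>I}|\lz_i|\,|\widehat{a_i}(x)|\ls(\sum_{i>I}|\lz_i|)[\rho_*(x)]^{\frac1{p_+}-1}$, the implicit constant being the uniform one of Lemma \ref{3l2}. Therefore $|F_2(x)|/[\rho_*(x)]^{\frac1{p_+}-1}\ls\sum_{i>I}|\lz_i|$ uniformly in such $x$. Hence, given $\var>0$, I would first choose $I$ so large that this tail contribution to the quotient is at most $\var$.

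The heart of the argument is the finite part $F_1$, where I would use the sharper Case 1) estimate recorded in the first line of \eqref{3e7} rather than the crude bound \eqref{3e5}. If $a_i$ is supported in $x_0^{(i)}+B_{j_i}$ and $\rho_*(x)$ is so small that $\rho_*((A^*)^{j_i}x)=b^{j_i}\rho_*(x)\le1$ for every $i\in\{1,\ldots,I\}$, equivalently $\rho_*(x)\le b^{-\max_{1\le i\le I}j_i}$, then each term obeys $|\widehat{a_i}(x)|\ls C_i[\rho_*(x)]^{(s+1)\frac{\ln\lz_-}{\ln b}}$, the constant $C_i$ depending only on $j_i$, $\vec{p}$ and $A$. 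Summing the finitely many terms gives $|F_1(x)|\ls[\rho_*(x)]^{(s+1)\frac{\ln\lz_-}{\ln b}}$ with an implicit constant depending on $I$. The crucial point is that the admissibility condition \eqref{3e2} on $s$ forces $(s+1)\frac{\ln\lz_-}{\ln b}>\frac1{p_-}-1\ge\frac1{p_+}-1$, so the exponent $\theta:=(s+1)\frac{\ln\lz_-}{\ln b}-(\frac1{p_+}-1)$ is strictly positive; consequently $|F_1(x)|/[\rho_*(x)]^{\frac1{p_+}-1}\ls[\rho_*(x)]^{\theta}\to0$ as $\rho_*(x)\to0$.

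Finally I would assemble the two pieces. Since $|x|\to0^+$ forces $\rho_*(x)\to0$ (by Lemma \ref{3l3}), after fixing $I$ from the tail step I can choose $\dz>0$ so small that, whenever $\rho_*(x)<\dz$, both $\rho_*(x)\le b^{-\max_{1\le i\le I}j_i}$ and $|F_1(x)|/[\rho_*(x)]^{\frac1{p_+}-1}<\var$ hold. Then the triangle inequality gives $|F(x)|/[\rho_*(x)]^{\frac1{p_+}-1}\ls\var$ for all such $x$, and letting $\var\to0$ yields \eqref{4e1}. I expect the only delicate bookkeeping to be the interplay of the two smallness thresholds, namely $I$ chosen from the convergent series $\sum_i|\lz_i|$ and $\dz$ chosen afterwards from the finitely many atomic scales $j_1,\ldots,j_I$, while the genuine mechanism, the strict gap $(s+1)\frac{\ln\lz_-}{\ln b}>\frac1{p_+}-1$ built into \eqref{3e2}, is exactly the higher-order vanishing that drives the limit.
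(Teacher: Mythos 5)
Your proposal is correct and follows essentially the same route as the paper: both rest on the summability $\sum_i|\lz_i|\ls\|f\|_{\vh}$ from Lemma \ref{3l5}, the uniform atomic bound \eqref{3e5} to control the series, and the sharper Case 1) estimate \eqref{3e7} together with the strict inequality $1-\frac1{p_+}+(s+1)\frac{\ln\lambda_-}{\ln b}>0$ to get the vanishing of each term $\widehat{a_i}(x)/[\rho_*(x)]^{\frac1{p_+}-1}$. The paper simply packages your explicit head--tail ($\var$-splitting) argument as an application of the dominated convergence theorem to the series in \eqref{4e6}, so the two proofs are the same in substance.
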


\begin{proof}
Let $\vec{p}\in(0,1]^n$, $r\in(\max\{p_+,1\},\fz]$, $s$ be as in \eqref{3e2}
and $f\in\vh$. Then, by Lemma \ref{3l4} and
Definition \ref{3d3}(\uppercase\expandafter{\romannumeral2}), we find that there exists a sequence
$\{\lz_i\}_{i\in\nn}\subset\mathbb{C}$ and a sequence of $(\vec{p},r,s)$-atoms,
$\{a_i\}_{i\in\nn}$, supported, respectively, in
$\{B^{(i)}\}_{i\in\nn}\subset\B$ such that
\begin{align*}
f=\sum_{i\in\nn}\lz_ia_i
\quad\mathrm{in}\quad\cs'(\rn),
\end{align*}
and
\begin{align}\label{4e2}
\|f\|_{\vh}\sim \lf\|\lf\{\sum_{i\in\nn}
\lf[\frac{|\lz_i|\mathbf{1}_{B^{(i)}}}{\|\mathbf{1}_{B^{(i)}}\|_{\lv}}\r]^
{\underline{p}}\r\}^{1/\underline{p}}\r\|_{\lv}.
\end{align}
Furthermore, from the proof of Theorem \ref{3t1}, it follows that the function
\begin{align}\label{4e3}
F(x)=\sum_{i\in\nn}\lz_i\widehat{a_i}(x), \quad \forall x\in\rn,
\end{align}
is continuous and satisfies that $\widehat{f}=F$ in $\cs'(\rn)$.

Thus, to show Theorem \ref{4t1}, we only need to prove \eqref{4e1} holds true for the function $F$
as in \eqref{4e3}. To do this, observe that, for any $(\vec{p},r,s)$-atom
$a$ supported in $x_0+B_{k_0}$ with some $x_0\in\rn$ and $k_0\in\zz$, when $\rho_*(x)\leq b^{-k_0}$,
\eqref{3e7} holds true. This, together  with the fact that
$$1-\frac1{p_+}+(s+1)\frac{\ln\lambda_-}{\ln b}>0,$$
implies that
\begin{align}\label{4e5}
\lim_{|x|\to0^+}\frac{|\widehat{a}(x)|}{[\rho_*(x)]^{\frac1{p_+}{-1}}}=0.
\end{align}
For any $x\in\rn$, we get the following inequality by \eqref{4e3}:
\begin{align}\label{4e6}
\frac{|F(x)|}{[\rho_*(x)]^{\frac1{p_+}{-1}}}
\leq\sum_{i\in\nn}|\lz_i|\frac{|\widehat{a_i}(x)|}{[\rho_*(x)]^{\frac1{p_+}{-1}}}.
\end{align}
Moreover, by \eqref{3e5} and the fact $\sum_{i\in\nn}|\lz_i|<\fz$,
we know that the dominated convergence theorem can be applied to the right side of \eqref{4e6}.
Combining this and \eqref{4e5}, we deduce that

\begin{align*}
\lim_{|x|\to0^+}\frac{F(x)}{[\rho_*(x)]^{\frac1{p_+}{-1}}}=0,
\end{align*}
which completes the proof of Theorem \ref{4t1}.
\end{proof}

\begin{remark}
\begin{enumerate}
\item[(i)]
Similarly to Remark \ref{3r1}(i),
if $\vec{p}=(p,\ldots,p)\in(0,1]^n$, then the Hardy space $\vh$ in Theorem \ref{4t1} coincides with
the anisotropic Hardy space $H^p_A(\rn)$ from \cite{bow03}. In this case, Theorem \ref{4t1} is just \cite[Corollary 6]{bw13}.
\item[(ii)]
By Theorem \ref{4t1} and Lemma \ref{3l3}, we have
\begin{align}\label{4e7}
\lim_{|x|\to 0^+}\frac{F(x)}
{|x|^{\frac{\ln b}{\ln\lambda_+}(\frac 1{p_+}-1)}}=0.
\end{align}
Observe that, when $\vec{p}=(p,\ldots,p)\in(0,1]^n$ and
$A=d\,{\rm I}_{n\times n}$ for some $d\in\rr$ with $|d|\in(1,\fz)$,
here and thereafter, ${\rm I}_{n\times n}$ denotes the \emph{unit matrix} of order $n$,
the Hardy space $\vh$ comes back to the classical Hardy space $H^p({\mathbb{R}^n})$ of
Fefferman and Stein \cite{fs72}. In this case, $\frac{\ln b}{\ln\lambda_+}=n$ and $p_+=p$,
and hence \eqref{4e7} is just the well-known result on $H^p({\mathbb{R}^n})$
(see \cite[p.\,128]{ste93}).
\end{enumerate}
\end{remark}

As another application of Theorem \ref{3t1}, we extend the
Hardy--Littlewood inequality to the setting of anisotropic
mixed norm Hardy spaces in the following theorem.

\begin{theorem}\label{4t2}
Let $\vec{p}\in(0,1]^n$. Then, for any
$f\in\vh$, there exists a continuous function $F$ on $\rn$ such that
$\widehat{f}=F$ in $\cs'(\rn)$
and
\begin{align}\label{4e8}
\lf(\int_\rn|F(x)|^{p_+}\min\lf\{\lf[\rho_*(x)\r]^{p_+-\frac{p_+}{p_-}-1},\,
\lf[\rho_*(x)\r]^{p_+-2}\r\}\,dx\r)^{\frac1{p_+}}
\leq C\|f\|_{\vh},
\end{align}
where $C$ is a positive constant depending only on $A$ and $\vec{p}$.
\end{theorem}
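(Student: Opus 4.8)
The plan is to combine the atomic decomposition of Lemma \ref{3l4} with the uniform atomic input behind Lemma \ref{3l2}, and then to sum by an inverse Minkowski argument. Write $w(x):=\min\{[\rho_*(x)]^{p_+-p_+/p_--1},[\rho_*(x)]^{p_+-2}\}$ for the weight in \eqref{4e8}. As in the proof of Theorem \ref{3t1}, I decompose $f=\sum_{i\in\nn}\lz_ia_i$ into $(\vec{p},r,s)$-atoms $a_i$ supported in $B^{(i)}\in\B$ with $\|f\|_{\vh}\sim\|\{\sum_i[|\lz_i|\mathbf 1_{B^{(i)}}/\|\mathbf 1_{B^{(i)}}\|_{\lv}]^{\underline{p}}\}^{1/\underline{p}}\|_{\lv}$, so that $F=\sum_i\lz_i\widehat{a_i}$ pointwise. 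Since $p_+\in(0,1]$, the elementary inequality $|\sum_iz_i|^{p_+}\le\sum_i|z_i|^{p_+}$ gives
\[
\int_\rn|F(x)|^{p_+}w(x)\,dx\le\sum_{i\in\nn}|\lz_i|^{p_+}\int_\rn|\widehat{a_i}(x)|^{p_+}w(x)\,dx,
\]
so it suffices to establish (a) a \emph{uniform} per-atom bound $\int_\rn|\widehat a(x)|^{p_+}w(x)\,dx\ls1$ and (b) the summation $\sum_i|\lz_i|^{p_+}\ls\|f\|_{\vh}^{p_+}$.

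For (a), fix a $(\vec{p},r,s)$-atom $a$ supported in $x_0+B_{i_0}$; since translating $a$ only multiplies $\widehat a$ by a unimodular factor, I may take $x_0=\mathbf 0$. The key input is the refined estimate behind \eqref{3e6}, namely $|\widehat a(x)|\ls b^{i_0}\|\mathbf 1_{B_{i_0}}\|_{\lv}^{-1}\min\{1,|(A^*)^{i_0}x|^{s+1}\}$. I would split $\rn$ into the level sets of $\rho_*$: on $\{\rho_*(x)\le b^{-i_0}\}$ Lemma \ref{3l3} converts $\min\{1,|(A^*)^{i_0}x|^{s+1}\}$ into a gain $\ls[b^{i_0}\rho_*(x)]^{(s+1)\ln\lambda_-/\ln b}$, while $w$ switches exponent across $\rho_*(x)=1$. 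Summing the resulting two-sided geometric series, the level exponent is strictly increasing as $\rho_*(x)\to0$ precisely because of the moment condition \eqref{3e2}, i.e. $(s+1)\frac{\ln\lambda_-}{\ln b}>\frac1{p_-}-1$, and strictly decreasing as $\rho_*(x)\to\fz$ because $p_+(1-\frac1{p_-})\le0$; hence the series peaks near $\rho_*(x)\sim b^{-i_0}$, giving $\int_\rn|\widehat a|^{p_+}w$ equal to $\|\mathbf 1_{B_{i_0}}\|_{\lv}^{-p_+}b^{i_0p_+/p_-}$ when $i_0\le0$ and to $\|\mathbf 1_{B_{i_0}}\|_{\lv}^{-p_+}b^{i_0}$ when $i_0>0$. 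Inserting the sharp lower bounds $\|\mathbf 1_{B_{i_0}}\|_{\lv}\gtrsim b^{i_0/p_-}$ for $i_0\le0$ and $\|\mathbf 1_{B_{i_0}}\|_{\lv}\gtrsim b^{i_0/p_+}$ for $i_0>0$ (both contained in the argument for \eqref{3e4}) collapses this to the uniform bound $\ls1$.

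For (b), set $c_i:=|\lz_i|/\|\mathbf 1_{B^{(i)}}\|_{\lv}$ and $G:=\{\sum_i(c_i\mathbf 1_{B^{(i)}})^{\underline{p}}\}^{1/\underline{p}}$, so that $\|G\|_{\lv}\sim\|f\|_{\vh}$ and $\underline{p}=p_-$. Since $p_+/p_-\ge1$, superadditivity of $t\mapsto t^{p_+/p_-}$ yields $G^{p_+}\ge\sum_ic_i^{p_+}\mathbf 1_{B^{(i)}}$ pointwise. Applying the monotone mixed norm $\|\cdot\|_{L^{\vec p/p_+}(\rn)}$, the identity $\||h|^{p_+}\|_{L^{\vec p/p_+}(\rn)}=\|h\|_{\lv}^{p_+}$, and then the inverse Minkowski inequality (available because every exponent $p_i/p_+\le1$), I obtain
\[
\|f\|_{\vh}^{p_+}\sim\|G\|_{\lv}^{p_+}=\|G^{p_+}\|_{L^{\vec p/p_+}(\rn)}\ge\sum_ic_i^{p_+}\|\mathbf 1_{B^{(i)}}\|_{\lv}^{p_+}=\sum_i|\lz_i|^{p_+},
\]
which is exactly (b). Combining (a) and (b) and taking $p_+$-th roots gives \eqref{4e8}.

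I expect the per-atom estimate (a) to be the main obstacle: one must track the exponent switch of $w$ at $\rho_*=1$ against the intrinsic scale $b^{-i_0}$ of the atom and secure convergence of the geometric series at the origin and at infinity simultaneously, the two ends being governed respectively by \eqref{3e2} and by the sign of $p_+(1-\frac1{p_-})$. A related delicate point is the borderline $p_+=1$, where that tail exponent, or an intervening flat regime, degenerates; there the crude bound $|\widehat a|\ls b^{i_0}\|\mathbf 1_{B_{i_0}}\|_{\lv}^{-1}$ must be supplemented by the $L^r$-integrability of atoms together with the Hausdorff--Young inequality in order to recover the missing decay.
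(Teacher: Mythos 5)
Your architecture is the same as the paper's: atomic decomposition from Lemma \ref{3l4}, the subadditivity $|F|^{p_+}\le\sum_{i}|\lambda_i|^{p_+}|\widehat{a_i}|^{p_+}$ valid for $p_+\in(0,1]$, a uniform per-atom integral estimate, and the inverse Minkowski argument giving $(\sum_{i}|\lambda_i|^{p_+})^{1/p_+}\lesssim\|f\|_{H^{\vec p}_A(\mathbb{R}^n)}$; your step (b) is the paper's estimate \eqref{4e12} almost verbatim. Where you differ is in the per-atom bound. The paper takes $r=2$ atoms, splits the integral at the dual scale into ${\rm I}_1$ (over $(A^*)^{-i_0+1}B_0^*$, handled by the refined estimate \eqref{3e7}) and ${\rm I}_2$ (the complement, handled by the Cauchy--Schwarz inequality together with the Plancherel theorem and the $L^2$ size condition of the atom), and this closes the estimate uniformly for every $\vec p\in(0,1]^n$. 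You instead integrate the pointwise bound \eqref{3e6} shell by shell over the level sets of $\rho_*$; your bookkeeping of the peak values and their cancellation against \eqref{3e4} is correct, and this route does yield the uniform bound whenever $p_+<1$.

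The gap is the borderline $p_+=1$, which you flag but do not resolve, and which cannot be set aside since the theorem assumes only $\vec p\in(0,1]^n$. Concretely, for an atom supported in $x_0+B_{i_0}$ with $i_0>0$, the shells $b^{-i_0}<\rho_*(x)\le 1$ carry the weight $[\rho_*(x)]^{p_+-2}$ and contribute $\sum_{0\le j<i_0}b^{j(1-p_+)}\sim i_0$ when $p_+=1$ (and, when $p_-=1$, the shells at infinity give a flat, divergent series), so the crude bound $|\widehat a(x)|\lesssim b^{i_0}\|\mathbf 1_{B_{i_0}}\|_{L^{\vec p}(\mathbb{R}^n)}^{-1}$ is not summable there and no uniform constant results. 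Your proposed repair --- exploiting the $L^r$ size condition through Hausdorff--Young --- is the right idea, but it must actually be carried out: choosing $r=2$ and pairing $\|\widehat a\|_{L^2(\mathbb{R}^n)}=\|a\|_{L^2(\mathbb{R}^n)}\le b^{i_0/2}\|\mathbf 1_{B_{i_0}}\|_{L^{\vec p}(\mathbb{R}^n)}^{-1}$ with H\"older's inequality against the weight on the whole outer region is exactly the paper's treatment of ${\rm I}_2$, which the paper applies for all $i_0$ and all admissible $\vec p$ rather than only as a patch at the endpoint. Until that computation is written out, the case $p_+=1$ of \eqref{4e8} remains unproved in your argument.
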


\begin{proof}
Let $\vec{p}\in(0,1]^n$ and $f\in\vh$. Then, by Lemma \ref{3l4}
and Definition \ref{3d3}(\uppercase\expandafter{\romannumeral2}), we find that
there exist a sequence
$\{\lz_i\}_{i\in\nn}\subset\mathbb{C}$ and a sequence of $(\vec{p},2,s)$-atoms
$\{a_i\}_{i\in\nn}$, supported respectively, in
$\{B^{(i)}\}_{i\in\nn}\subset\B$ such that
\begin{align*}
f=\sum_{i\in\nn}\lz_ia_i
\quad\mathrm{in}\quad\cs'(\rn),
\end{align*}
and
\begin{align}\label{4e9}
\lf\|\lf\{\sum_{i\in\nn}
\lf[\frac{|\lz_i|\mathbf{1}_{B^{(i)}}}{\|\mathbf{1}_{B^{(i)}}\|_{\lv}}\r]^
{\underline{p}}\r\}^{1/\underline{p}}\r\|_{\lv}\ls\|f\|_{\vh}<\fz.
\end{align}
To prove Theorem \ref{4t2}, it suffices to show that \eqref{4e8}
holds true for the function $F$ as in \eqref{4e3}. For this purpose,
by the fact that $\underline{p}\le p_+\le1$, the inverse Minkovski inequality and \eqref{4e9}, we have
\begin{align}\label{4e12}
\left(\sum_{i\in{\mathbb N}}|\lambda_i|^{p_+}\right)^{1/p_+}
&=\left(\sum_{i\in{\mathbb N}}\left\|\frac{|\lambda_i|{\mathbf 1}_{B^{(i)}}}
{\|{\mathbf 1}_{B^{(i)}}\|_{L^{\vec{p}}(\mathbb{R}^n)}}
\right\|_{\lv}^{p_+}\right)^{1/p_+}\\
&=\left(\sum_{i\in{\mathbb N}}
\left\|\frac{|\lambda_i|^{p_+}{\mathbf 1}_{B^{(i)}}}{\|{\mathbf 1}_{B^{(i)}}\|_{\lv}^{p_+}}
\right\|_{L^{\vec{p}/p_+}({\mathbb{R}^n})}\right)^{1/p_+}\nonumber\\
&\le \left\|\sum_{i\in{\mathbb N}}
\left[\frac{|\lambda_i|{\mathbf 1}_{B^{(i)}}}{\|{\mathbf 1}_{B^{(i)}}\|_{\lv}}
\right]^{p_+}\right\|_{L^{\vec{p}/p_+}({\mathbb{R}^n})}^{1/p_+}\nonumber\\
&=\lf\|\lf\{\sum_{i\in{\mathbb N}}\lf[\frac{|\lambda_i|{\mathbf 1}_{B^{(i)}}}
{\|{\mathbf 1}_{B^{(i)}}\|_{\lv}}
\right]^{p_+}\right\}^{1/p_+}\right\|_{L^{\vec{p}}(\mathbb{R}^n)}\nonumber\\
&\le\left\|\left\{\sum_{i\in{\mathbb N}}
\left[\frac{|\lambda_i|{\mathbf 1}_{B^{(i)}}}{\|{\mathbf 1}_{B^{(i)}}\|_
{\lv}}\right]^{\underline{p}}\right\}^{1/\underline{p}}
\right\|_{\lv}\nonumber\\
&\ls\|f\|_{\vh}.\nonumber
\end{align}

On another hand, from \eqref{4e3}, the fact that $p_+\in(0,1]$, \eqref{3e8}
and the Fatou lemma, it follows that
\begin{align}\label{4e13}
&\int_{{\mathbb{R}^n}}|F(x)|^{p_+}
\min\left\{\left[\rho_{*}(x)\right]^{p_+-\frac {p_+}{p_-}-1},\,
\left[\rho_{*}(x)\right]^{p_+-2}\right\}\, dx\\
&\quad\le \sum_{i\in{\mathbb N}}|\lambda_i|^{p_+}\int_{{\mathbb{R}^n}}\left[\lf|\widehat{a_i}(x)\r|
\min\left\{\left[\rho_{*}(x)\right]^{1-\frac 1{p_-}-\frac 1{p_+}},\,
\left[\rho_{*}(x)\right]^{1-\frac 2{p_+}}\right\}\right]^{p_+}\,dx.\noz
\end{align}
Next, we devote to proving the following uniform estimate for all $(\vec{p},2,s)$-atoms, namely,
\begin{align}\label{4e14}
\left(\int_{{\mathbb{R}^n}}\left[\lf|\widehat{a}(x)\r|
\min\left\{\left[\rho_{*}(x)\right]^{1-\frac 1{p_-}-\frac 1{p_+}},\,
\left[\rho_{*}(x)\right]^{1-\frac 2{p_+}}
\right\}\right]^{p_+}\,dx\right)^{1/p_+}\le M,
\end{align}
where $M$ is a positive constant independent of $a$. Assume that \eqref{4e14} holds true for the moment. Combining this, \eqref{4e12} and \eqref{4e13}, we conclude that
\begin{align*}
&\lf(\int_{{\mathbb{R}^n}}|F(x)|^{p_+}
\min\left\{\left[\rho_{*}(x)\right]^{p_+-\frac {p_+}{p_-}-1},\,
\left[\rho_{*}(x)\right]^{p_+-2}\right\}\,dx\r)^{1/{p_+}}\\
&\quad\le M\left(\sum_{i\in{\mathbb N}}|\lambda_i|^{p_+}\right)^{1/{p_+}}
\ls\|f\|_{\vh}.
\end{align*}
This is the desired conclusion \eqref{4e8}.

Thus, the rest of the whole proof is to show the assertion
\eqref{4e14}. Indeed, for any $(\vec{p},2,s)$-atom $a$ supported in
a dilated ball $x_0+B_{i_0}$ with some $x_0\in\rn$ and $i_0\in\zz$,
it is easy to see that
\begin{align*}
&\left(\int_{{\mathbb{R}^n}}\left[\lf|\widehat{a}(x)\r|
\min\left\{\left[\rho_{*}(x)\right]^{1-\frac 1{p_-}-\frac 1{p_+}},\,
\left[\rho_{*}(x)\right]^{1-\frac 2{p_+}}\right\}\right]^{p_+}\,dx\right)^{1/{p_+}}\\
&\quad\ls\left(\int_{(A^*)^{-i_0+1}B_0^*}\left[\lf|\widehat{a}(x)\r|
\min\left\{\left[\rho_{*}(x)\right]^{1-\frac 1{p_-}-\frac 1{p_+}},\,
\left[\rho_{*}(x)\right]^{1-\frac 2{p_+}}\right\}\right]^{p_+}\,dx\right)^{1/{p_+}}\\
&\qquad
+\left(\int_{((A^*)^{-i_0+1}B_0^*)^{\complement}}\left[\lf|\widehat{a}(x)\r|
\min\left\{\left[\rho_{*}(x)\right]^{1-\frac 1{p_-}-\frac 1{p_+}},\,
\left[\rho_{*}(x)\right]^{1-\frac 2{p_+}}\right\}\right]^{p_+}\,dx\right)^{1/{p_+}}\\
&\quad=:{\rm I}_1+{\rm I}_2,
\end{align*}
where $B^*_0$ is the unit dilated ball with respect to $A^*$.

Let $\theta$ be a fixed positive constant such that
$$1-\frac1{p_+}+(s+1)\frac{\ln\lambda_-}{\ln b}-\theta
\geq1-\frac1{p_-}+(s+1)\frac{\ln\lambda_-}{\ln b}-\theta>0.$$
Then, to deal with ${\rm I}_1$, by \eqref{3e7}, we know that
\begin{align*}
{\rm I}_1
&\ls b^{i_0[1+(s+1)\frac{\ln\lambda_-}{\ln b}]}
\max\lf\{b^{-\frac{i_0}{p_-}},\,b^{-\frac{i_0}{p_+}}\r\}\\
&\hs\times\left(\int_{(A^*)^{-i_0+1}B_0^*}\left[
\min\left\{\left[\rho_{*}(x)\right]^{1-\frac 1{p_-}-\frac 1{p_+}+(s+1)\frac{\ln\lambda_-}{\ln b}},\,
\left[\rho_{*}(x)\right]^{1-\frac 2{p_+}+(s+1)\frac{\ln\lambda_-}{\ln b}}\right\}\right]^{p_+}\,dx\right)^{1/{p_+}}\\
&\ls b^{i_0[1+(s+1)\frac{\ln\lambda_-}{\ln b}]}
\max\lf\{b^{-\frac{i_0}{p_-}},\,b^{-\frac{i_0}{p_+}}\r\}\\
&\hs\times\min\lf\{b^{-i_0[1-\frac1{p_-}+(s+1)
\frac{\ln\lambda_-}{\ln b}-\theta]},\,b^{-i_0[1-\frac1{p_+}+(s+1)
\frac{\ln\lambda_-}{\ln b}-\theta]}\r\}\left(\int_{(A^*)^{-i_0+1}B_0^*}
\left[\rho_{*}(x)\right]^{\theta p_+-1}\,dx\right)^{1/{p_+}}\\
&\sim b^{i_0\theta}\left[\sum_{k\in\zz\setminus\nn}b^{-i_0+k}(b-1)b^{(-i_0+k)(\theta p_+-1)}\r]^{1/{p_+}}\sim\lf(\frac{b-1}{1-b^{-\theta p_+}}\r)^{\frac1{p_+}}.
\end{align*}
As for the estimate of ${\rm I}_2$, by the H\"{o}lder inequality, the Plancherel theorem,
the fact that $0<p_-\le p_+\le1$ and the size condition of $a$, we obtain
\begin{align*}
{\rm I}_2
&\ls\left\{\int_{((A^*)^{-i_0+1}B_0^*)^{\complement}}
\lf|\widehat{a}(x)\r|^2\,dx\right\}^{\frac12}\\
&\hs\hs\times\left\{\int_{((A^*)^{-i_0+1}B_0^*)^{\complement}}
\left[\min\left\{\left[\rho_{*}(x)\right]^{1-\frac 1{p_-}-\frac 1{p_+}},\,
\left[\rho_{*}(x)\right]^{1-\frac 2{p_+}}\right\}\right]^{\frac{2p_+}{2-p_+}}\,dx\right\}^{\frac{2-p_+}{2p_+}}\\
&\ls\|a\|_{L^2({\mathbb{R}^n})}\left\{\sum_{k\in\nn} b^{-i_0+k}(b-1)
\left[\min\left\{b^{(-i_0+k)(1-\frac 1{p_-}-\frac 1{p_+})},\,
b^{(-i_0+k)(1-\frac 2{p_+})}\right\}\right]^{\frac{2p_+}{2-p_+}}\right\}^{\frac{2-p_+}{2p_+}}\\
&\ls\|a\|_{L^2({\mathbb{R}^n})}\left\{b^{-i_0}
\left[\min\left\{b^{-i_0(1-\frac 1{p_-}-\frac 1{p_+})},\,
b^{-i_0(1-\frac 2{p_+})}\right\}\right]^{\frac{2p_+}{2-p_+}}\right\}^{\frac{2-p_+}{2p_+}}\\
&\ls\max\left\{b^{i_0(\frac12-\frac 1{p_-})},\,
b^{i_0(\frac12-\frac 1{p_+})}\right\}
\min\left\{b^{-i_0(\frac12-\frac 1{p_-})},\,
b^{-i_0(\frac12-\frac 1{p_+})}\right\}\\
&\sim 1.
\end{align*}
This finishes the proof of \eqref{4e14} and hence of Theorem \ref{4t2}.
\end{proof}

\begin{remark}
Actually, when $\vec{p}=(p,\ldots,p)\in(0,1]^n$, the Hardy space $\vh$ in Theorem \ref{4t2} is just
the anisotropic Hardy space $H^p_A(\rn)$ from \cite{bow03} in the sense of equivalent quasi-norms. Thus, we point out that Theorem \ref{4t2} covers \cite[Corollary 8]{bw13}.
Moreover, if $A=d\,{\rm I}_{n\times n}$ for some $d\in\rr$ with $|d|\in(1,\fz)$,
then the anisotropic mixed-norm Hardy space $H^{\vec{p}}_A(\rn)$, with
$\vec{p}=(p,\ldots,p)\in(0,1]^n$, coincides with the classical Hardy space $H^p({\mathbb{R}^n})$ of
Fefferman and Stein \cite{fs72}. In this case, $\rho_*(x)\sim|x|^n$ for any $x\in\rn$, and hence \eqref{4e6} is just the classic Hardy--Littlewood inequality as in \eqref{1e2}.
\end{remark}

\bigskip

\noindent  Jun Liu (Corresponding author), Yaqian Lu and Mingdong Zhang

\medskip

\noindent  School of Mathematics,
China University of Mining and Technology,
Xuzhou 221116, Jiangsu, People's Republic of China

\smallskip

\noindent {\it E-mails}: \texttt{junliu@cumt.edu.cn} (J. Liu)

\hspace{1cm}\texttt{yaqianlu@cumt.edu.cn} (Y. Lu)

\hspace{1cm}\texttt{mdzhang@cumt.edu.cn} (M. Zhang)
\end{document}